\newtheorem{thm}{Theorem}[section]
\newtheorem{theorem}[thm]{Theorem}
\newtheorem{lemma}[thm]{Lemma}
\newtheorem{proposition}[thm]{Proposition}
\newcommand{\beq}{\begin{equation}}
\newcommand{\eeq}{\end{equation}}
\newcommand{\beqa}{\begin{eqnarray}}
\newcommand{\eeqa}{\end{eqnarray}}
\newcommand{\beqas}{\begin{eqnarray*}}
\newcommand{\eeqas}{\end{eqnarray*}}
\newcommand{\bi}{\begin{itemize}}
\newcommand{\ei}{\end{itemize}}
\newcommand{\nn}{\nonumber}
\newcommand{\R}{\mathbb{R}}
\newcommand{\lam}{{\lambda}}
\newcommand{\inner}[2]{\langle #1,#2\rangle}
\newcommand{\argmin}{\mathrm{argmin}\,}
\newcommand{\dom}{\mathrm{dom}\,}
\newcommand{\bConv}[1]{\overline{\mbox{\rm Conv}}\,(\R^{#1})}
\newcommand{\tx}{\tilde x}
\newcommand{\ty}{\tilde y}
\begin{document}
	\title{A FISTA-type accelerated gradient algorithm for solving  \\ smooth nonconvex composite optimization problems}
	\date{May 16, 2019 (1st revision: November 4, 2019; 2nd revision: March 5, 2021)}
	\author{
		Jiaming Liang \thanks{School of Industrial and Systems
			Engineering, Georgia Institute of
			Technology, Atlanta, GA, 30332-0205.
			(email: {\tt jiaming.liang@gatech.edu} and {\tt renato.monteiro@isye.gatech.edu}). This work
			was partially supported by ONR Grant N00014-18-1-2077.}\qquad 
		Renato D.C. Monteiro \footnotemark[1]\qquad 
		Chee-Khian Sim \thanks{School of Mathematics and Physics, University of Portsmouth, Lion Gate Building, Lion Terrace, Portsmouth PO1 3HF. (email: {\tt chee-khian.sim@port.ac.uk}). This work is made possible through an LMS Research in Pairs (Scheme 4) grant.} 
	}
	\maketitle
	
	%
	%
	
	\begin{abstract}
		In this paper, we describe and establish iteration-complexity of two accelerated composite gradient (ACG) variants to solve a smooth nonconvex composite optimization problem whose objective function is the sum of a nonconvex differentiable function $ f $ with a Lipschitz continuous gradient and a simple nonsmooth closed convex function $ h $. When $f$ is convex, the first ACG variant reduces to the well-known FISTA for a specific choice of the input, and hence the first one can be viewed as a natural extension of the latter one to the nonconvex setting. The first variant requires an input pair $(M,m)$ such that
		$f$ is $m$-weakly convex, $\nabla f$ is $M$-Lipschitz continuous, and $m \le M$ (possibly $m<M$), which is usually hard to obtain or poorly estimated. The second variant on the other hand can start from an arbitrary input pair $(M,m)$ of positive scalars and its complexity is shown to be not worse, and better in some cases, than that of the first variant for a large range of the input pairs. Finally, numerical results are provided to illustrate the efficiency of the two ACG variants.
	\end{abstract}

	\section{Introduction}

	Accelerated gradient methods for solving convex noncomposite  programs were originally developed by
	Nesterov in his celebrated work \cite{ag_nesterov83}. Subsequently, several variants of this method (see for example \cite{beck2009fast,lan2011primal,MonteiroSvaiterAcceleration,nesterov2012gradient,Nest05-1,tseng2008accmet})  were developed for solving
	convex simple-constrained or composite programs, which we refer generically to as ACG variants.
	These variants have also been used as subroutines in several inexact-type proximal algorithms
	for solving convex-concave saddle point and monotone Nash equilibrium problems (see for example \cite{chen2014optimal, He3, He1, Kolossoski, Nest05-1,LanADMM}).
	
	In this paper, we study
	ACG algorithms to solve the smooth nonconvex composite optimization (SNCO) problem
	\begin{equation}\label{eq:probintro}
		\phi_*:=\min \left\{ \phi(z):=f(z) + h(z)  : z \in \mathbb{R}^n \right \}
	\end{equation}
	where $h:\R^n \to (-\infty,\infty]$ is a proper lower-semicontinuous convex function with bounded $\dom h$
	and $f$ is a real-valued differentiable (possibly nonconvex) function whose gradient is $M$-Lipschitz continuous
	on $\dom h$, i.e., for every $z, z' \in \dom h$,
	\begin{equation}\label{ineq:Lips}
		\| \nabla f(z') - \nabla f(z) \| \le M \|z'-z\| .
	\end{equation}
	The first analysis of an ACG algorithm for solving \eqref{eq:probintro} under the above assumption appears in \cite{nonconv_lan16}
	where essentially a well-known ACG variant that solves the convex version of \eqref{eq:probintro} is also shown to
	solve its nonconvex version in the following sense: for a given tolerance $\hat \rho>0$, it computes $(\hat y,\hat v)  \in \dom h \times \R^n$ such that
	$\hat v \in  \nabla f(\hat y) + \partial h(\hat y)$ and $\| \hat{v} \| \leq \hat{\rho}$ in
	\begin{eqnarray}\label{complexityresult}
		\mathcal{O} \left( \frac{{M}{\bar m}D_h^2}{\hat{\rho}^2} + \left(\frac{Md_0}{\hat{\rho}}\right)^{2/3} \right)
	\end{eqnarray}
	iterations where $d_0$ is the distance of the initial point $x_0$ to the optimal solution set of \eqref{eq:probintro},
	$D_h$ is the diameter of $\dom h$ and ${\bar m}$ is the smallest scalar
	$m \ge 0$ such that
	\begin{equation}\label{ineq:curv}
		-\frac m2\|z'-z\|^2\le f(z')- f(z) - \inner{\nabla f(z)}{z'-z}.
	\end{equation}
	for every $z, z' \in \dom h$.
	Any pair $ (M,m) $ with $m \le M$ and
	satisfying both \eqref{ineq:Lips} and \eqref{ineq:curv} is referred to as a curvature pair.
	We refer to the ACG variant of \cite{nonconv_lan16} as the AG method and note that
	each one of its iterations performs exactly two resolvent evaluations of $h$, i.e., an
	evaluation of the point-to-point operator $(I+ \tau \partial h)^{-1}(\cdot)$ for some $\tau>0$.
	(Several examples of convex, as well as nonconvex,
	functions $h$ whose resolvent evaluations are easy to compute
	can be found in \cite{gong2013general}.)

	%
	This paper describes and establishes the iteration-complexities of two
	ACG variants for solving the nonconvex version of \eqref{eq:probintro}.
	The first variant can be viewed as a direct extension of the FISTA presented
	in  \cite{beck2009fast} for solving the convex version of \eqref{eq:probintro}.
	In contrast to an iteration of the AG method,
	every iteration of the first variant performs exactly 
	one resolvent evaluation of $h$. One drawback of the first variant is that it requires as input a curvature pair $(M,m)$, 
	which is usually hard to obtain or is poorly estimated.
	Letting $(\bar M,\bar m)$ denote the smallest curvature pair,
	a second variant is proposed to remedy the aforementioned drawback in that it works regardless of the choice of input pair $(M,m)$
	(i.e.,
	not necessarily satisfying \eqref{ineq:Lips} and \eqref{ineq:curv}),
	and its complexity is shown to be not worse  than \eqref{complexityresult} when $M \ge \bar M$ and $m \in [\bar m, M]$.
	Moreover, when $m \in [\bar m, \bar M]$, the complexity of the second variant is
	empirically argued to behave as \eqref{complexityresult} with $M=\bar M$,
	for a large range of scalars $M$  such that $M \le \bar M$
	(see the second paragraph following Theorem \ref{thm:ADAP-NC-FISTA}) and our computational results demonstrate that
	taking $M$ relatively smaller than $\bar M$ can substantially improve its performance.
	It is also shown that all iterations of the second variant,
	with the exception of a few ones whose total number is $\log$-bounded,
	perform exactly one resolvent evaluation of $h$.
	

	{\bf Related works.}  Inspired by \cite{nonconv_lan16}, other papers have proposed ACG variants for solving (\ref{eq:probintro}) under the assumption that $f$ is a nonconvex continuously differentiable function
	with a Lipschitz continuous gradient, and that $h$ is a simple lower semi-continuous
	convex (see e.g.\ \cite{Paquette2017, LanUniformly}) or nonconvex (see e.g.\ \cite{Li_Lin2015, li2017convergence, Yao_et.al.}) function. 
	Similar to an iteration of the two ACG variants in our paper, the one of the algorithms in \cite{li2017convergence,Yao_et.al.} requires exactly one resolvent evaluation of $h$.
	However, while every iteration of the variants studied here is always accelerated,
	the ones of the latter algorithms can be a simple composite gradient (and unaccelerated)
	step whenever a certain descent property is not satisfied.
	
	Another approach for solving \eqref{eq:probintro} consists of using a descent unaccelerated inexact
	proximal-type method 
	where each prox subproblem is constructed to be (possibly strongly) convex and hence solved by an ACG variant
	(see \cite{carmon2018accelerated,KongMeloMonteiro,rohtua}).
	Moreover, the approach has the benefit of working with a larger prox stepsize and hence of having a better outer iteration-complexity than the approaches in the previous paragraph.
	However, each of its outer iterations still has to perform a uniformly bounded
	number of inner iterations to approximately solve a prox subproblem.
	Overall, it is shown that its inner-iteration complexity is better than the iteration-complexities of
	the methods in the previous
	paragraph, particularly when $\bar m \ll \bar M$.
	As in the papers \cite{Paquette2017, LanUniformly,Li_Lin2015, li2017convergence, Yao_et.al.} in the previous paragraph, it is worth noting that
	the method in \cite{rohtua} attempts to
	perform an accelerated step whenever a certain descent property holds and, in case of failure,
	it performs an unaccelerated prox step similar to the one used in the methods in \cite{carmon2018accelerated,KongMeloMonteiro}.
	
	Finally, a hybrid approach that borrows ideas from the above group of papers is presented in \cite{jliang2018double}. More specifically, the latter work presents
	an accelerated inexact proximal point method reminiscent of those presented
	in \cite{guler1992new,MonteiroSvaiterAcceleration,salzo2012inexact}, but in which only the convex version of \eqref{eq:probintro} is considered.
	Each (outer) iteration of the method requires that a prox subproblem be approximately solved
	by using an ACG variant in the same way as in the papers \cite{carmon2018accelerated,KongMeloMonteiro}.
	Hence, similar to the methods in the previous paragraph,
	this method performs both outer and inner iterations with a major difference
	that every outer iteration is an accelerated step (as in the papers \cite{Paquette2017, LanUniformly,Li_Lin2015, li2017convergence, Yao_et.al.}) with a large
	proximal stepsize (as in the papers \cite{carmon2018accelerated,KongMeloMonteiro}).

	{\bf Organization of the paper.}  Subsection~\ref{sec:DefNot} presents basic definitions and notations used throughout the paper. 
	Section~\ref{sec:alg+cmplx} presents assumptions made on the SNCO problem, describes the first  ACG variant, which is an extension of FISTA to the SNCO problem and is referred to as NC-FISTA, and establishes its iteration-complexity for obtaining a stationary point of the SNCO problem. 
	Section~\ref{variants} presents an adaptive variant of NC-FISTA, namely,  ADAP-NC-FISTA, and establishes its iteration-complexity.
	Section~\ref{sec:computResults} presents computational results showing the efficiency of
	NC-FISTA and ADAP-NC-FISTA.
	Section~\ref{sec:conclusions} finishes the paper by presenting a few concluding remarks.
	Finally, supplementary technical results are provided in the appendix.
	\subsection{Basic definitions and notation} \label{sec:DefNot}
	
	This subsection provides some basic definitions and notations used
	in this paper.
	
	
	The set of real numbers is denoted by $\mathbb{R}$. The set of non-negative real numbers  and 
	the set of positive real numbers are denoted by $\mathbb{R}_+$ and $\mathbb{R}_{++}$, respectively. Let $\mathbb{R}^n$ denote the standard $n$-dimensional Euclidean 
	space with  inner product and norm denoted by $\left\langle \cdot,\cdot\right\rangle $
	and $\|\cdot\|$, respectively. 
	The Frobenius inner product and Frobenius norm in $ \R^{m\times n} $
	are denoted by $ \inner{\cdot}{\cdot}_F $ and $ \|\cdot\|_F $, respectively.
	The sets of real $ n \times n $ symmetric positive semidefinite matrices are denoted by $ S_{+}^n $.
	Let $ N_X(z) $ denote the normal cone of $ X $ at $ z $, i.e., $ N_X(z)=\{u\in \R^n: \inner{u}{z'-z} \le 0 \quad  \forall z' \in X \} $.
	The indicator function $I_X$ of a set
	$ X\subset \R^n $ is defined as $ I_X (z) =0 $ for every $ z\in X $, and
	$ I_X (z) =\infty $, otherwise. 
	If $ \Omega $ is a nonempty closed convex set, the orthogonal projection $ P_{\Omega}: \R^n \rightarrow \R^n $ onto $ \Omega $ is defined as 
	\[
	P_{\Omega}(z):=\argmin_{z'\in \Omega} \|z'-z\| \quad \forall z\in \R^n.
	\]
	Define $\log^+(s):= \max\{\log s ,0\}$ and $\log^+_1(s):= \max\{\log s ,1\}$ for $s>0$.
	
	Let $\Psi: \mathbb{R}^n\rightarrow (-\infty,+\infty]$ be given. The effective domain of $\Psi$ is denoted by
	$\dom \Psi:=\{x \in \mathbb{R}^n: \psi (x) <\infty\}$ and $\Psi$ is proper if $\dom \Psi \ne \emptyset$.
	Moreover, a proper function $\Psi: \mathbb{R}^n\rightarrow (-\infty,+\infty]$ is $\mu$-strongly convex for some $\mu \ge 0$ if
	$$
	\Psi(\beta z+(1-\beta) z')\leq \beta \Psi(z)+(1-\beta)\Psi(z') - \frac{\beta(1-\beta) \mu}{2}\|z-z'\|^2
	$$
	for every $z, z' \in \dom \Psi$ and $\beta \in [0,1]$.
	Let $\partial \Psi (z)$ denote the subdifferential of $\Psi$ at $z \in \dom \Psi$.
	If $\Psi$ is differentiable at $\bar z \in \mathbb{R}^n$, then its affine approximation $\ell_\Psi(\cdot;\bar z)$ at $\bar z$ is defined as
	\[
	\ell_\Psi(z;\bar z) :=  \Psi(\bar z) + \inner{\nabla \Psi(\bar z)}{z-\bar z} \quad \forall  z \in \mathbb{R}^n.
	\]
	Let $\bConv{n}$ denote the set of all proper lower semi-continuous convex functions $\Psi:\mathbb{R}^n\rightarrow (-\infty,+\infty]$.
	
	%
	
	\section{NC-FISTA for solving the SNCO problem}\label{sec:alg+cmplx}
	
	This section describes the assumptions made on our problem of interest, namely, problem \eqref{eq:probintro}.
	It also presents and establishes the iteration-complexity of 
	the first ACG variant, namely NC-FISTA,
	for obtaining an approximate solution of \eqref{eq:probintro}.
	
	Throughout this paper, we consider problem \eqref{eq:probintro} and make the following assumptions on it:
	\begin{itemize}
		\item[(A1)] $h \in \bConv{n}$;  
		\item[(A2)] $\dom h$ is bounded;
		\item[(A3)] $f$ is differentiable on a closed convex set $ \Omega \supseteq \dom h$ and there exists
		$M > 0$ such that \eqref{ineq:Lips} holds for every $z,z' \in \Omega$;
		\item[(A4)] $f$ is nonconvex on $\dom h$ and there exists $ m>0 $ such that \eqref{ineq:curv}
		holds for every $z,z' \in \Omega$.
	\end{itemize}
	
	Throughout this paper, we denote the diameter of $\dom h$ as
	\beq \label{eq:diam}
	D_h  :=  \sup \{  \| u'-u \| : u,u' \in \dom h \} < \infty
	\eeq
	where its finiteness is due to (A2).
	Moreover,  let  $\bar M$ (resp., $\bar m$) denote the smallest scalar $M$ (resp., $m$) satisfying \eqref{ineq:Lips}
	(resp., \eqref{ineq:curv})
	for every $z,z' \in \Omega$. Clearly, $\bar M \ge \bar m>0$.
	
	
	We now make a few remarks about the above assumptions.
	First, (A1)-(A3) imply that the set $Z^*$ of optimal solutions of \eqref{eq:probintro}
	is nonempty and compact.
	Second,
	using the fact that $\bar M$ satisfies \eqref{ineq:Lips} for every $z, z' \in \Omega$ in view of the above definition of
	$\bar M$, we easily see that
	\[
	\left| f(z')- \ell_f(z';z) \right| \le \frac {\bar M}2\|z'-z\|^2
	\quad \forall z, z' \in \Omega,
	\]
	and hence that \eqref{ineq:curv} is satisfied with $m=\bar M$. Thus, it follows that
	from the definition of $\bar m$ 
	that $\bar m\le \bar M$.
	Third, (A4) implies that $\bar m >0$.
	Fourth, our interest is in the case where
	$\bar m \ll \bar M$ since this case naturally arises in the context of penalty methods for solving
	linearly constrained composite nonconvex optimization problems (e.g., see  Section 4 of \cite{KongMeloMonteiro}).
	
	For $ z\in\dom  h$ to be a local minimizer of \eqref{eq:probintro}, a necessary condition is that
	$ z$ is a stationary point of \eqref{eq:probintro}, i.e.,  $0 \in \nabla f( z)+\partial h( z)$.
	Motivated by this remark, the following notion of an approximate solution to problem \eqref{eq:probintro} is proposed:
	a pair $( \hat{y},\hat{v})$ is said to be a $\hat\rho$-approximate solution to \eqref{eq:probintro}, for a given tolerance $\hat \rho>0$, if
	\begin{equation}\label{rho-sol}
		\hat {v} \in \nabla f(\hat{y}) + \partial h (\hat{y}), \quad \|\hat{v}\| \le  \hat \rho.
	\end{equation}

	We are now ready to state the NC-FISTA for solving  \eqref{eq:probintro}.
	
	\noindent\rule[0.5ex]{1\columnwidth}{1pt}
	
	NC-FISTA
	
	\noindent\rule[0.5ex]{1\columnwidth}{1pt}
	
	\begin{itemize}
		%
		\item[0.] Let an initial point $y_0 \in \dom h$, a pair $(M,m) \in \R^2_{++}$ such that $M \ge m \ge \bar m$ and
		$M > \bar M$, a scalar $A_0>0$, and
		a tolerance $\hat \rho>0$
		%
		%
		%
		%
		be given, and
		set $x_0=y_0$, $\lam=1/M$, $k=0$ and
		\begin{equation}\label{eq:kappa}
			\kappa_0 = \frac{ 1+ \sqrt{1 + 4 A_0}}{\sqrt{1 + 4 A_0}-1};
		\end{equation}
		\item[1.] compute
		\begin{align}\label{ak} 
			a_k = \frac{1+ \sqrt{1 + 4 A_k}}{2}, \quad A_{k+1} = A_k + a_k;
		\end{align}
		\item[2.] compute
		\begin{align}
			& \tx_k  =  \frac{A_k}{A_{k+1}}y_k + \frac{a_k}{A_{k+1}}x_k \label{tildexk} \\
			& {y}_{k+1} = {\mbox{argmin}}_u \left\{ \ell_f(u; \tilde{x}_k) + h(u) + \frac{1}{2}\left( \frac{1}{\lam}+\frac{\kappa_0 m}{a_k}\right)  \| u - \tilde{x}_k \|^2 \right\}, \label{minimize1} \\
			&  \hat x_{k+1} =  \frac{(a_k + \kappa_0 m\lam)y_{k+1}-(a_k-1)y_k}{\kappa_0 m \lam + 1}, \quad x_{k+1}= P_\Omega\left(\hat x_{k+1}\right) ; \label{minimize2prime}
		\end{align}
		\item[3.] compute 
		\begin{eqnarray}\label{def:v}
			v_{k+1} =  \left( \frac{1}{\lam}+\frac{\kappa_0 m }{a_k}\right) (\tilde{x}_k - y_{k+1}) + \nabla f(y_{k+1}) - \nabla f(\tilde{x}_k); \label{vk}
		\end{eqnarray}
		if $\|v_{k+1} \| \le \hat \rho$ then output  $(\hat y,\hat v)=(y_{k+1},v_{k+1})$ and {\bf stop};
		otherwise, set $k \leftarrow k+1$ and go to step 1.
	\end{itemize}
	\rule[0.5ex]{1\columnwidth}{1pt}
	
	We now make a few remarks about the NC-FISTA.
	First,  it follows from \eqref{minimize1} that $\{y_k\} \subset  \dom h$, and hence 
	$\{y_k\}$ is bounded in view of (A2).
	Second, the definition of $\{x_k\}$ in \eqref{minimize2prime} implies that $ \{x_{k}\}\subset \Omega $, and hence that
	$\{\tx_{k}\}\subset \Omega $ in view of \eqref{tildexk}. Hence, if $\Omega$ is chosen to be compact, then the latter two sequences
	will also be bounded but our analysis does not make such an assumption on $\Omega$.
	Third, if $\Omega = \R^n$, then each iteration of the NC-FISTA requires one resolvent evaluation of $ h$
	in \eqref{minimize1}, i.e.,
	an evaluation of $(I+ \tau \partial h)^{-1}$ for some $\tau>0$.
	Otherwise, it requires an extra projection onto $\Omega$ in \eqref{minimize2prime} , which, depending on the problem instance
	and the set $\Omega$,
	might be considerably cheaper than a resolvent evaluation of $ h$.
	Fourth, it follows from \eqref{ak} that $\{a_k\}$ and $\{A_k\}$ are strictly increasing sequences of positive scalars.
	Fifth, $A_0$ is required to be positive so as to guarantee that the quantity $\kappa_0$ defined in \eqref{eq:kappa} is well-defined.
	We will assume later on that $A_0={\Theta}(1)$ so as to eliminate it from the iteration-complexity bounds for NC-FISTA.
	Sixth, NC-FISTA requires that $M$ and $m$ be upper bounds for $\bar M$ and $\bar m$, respectively,
	due to technical requirements that appear in its iteration-complexity analysis.
	Actually, $M$ is also required to be not too close to $\bar M$.
	Seventh, if a scalar $M$ is known, then setting $m$ to be equal to $M$
	fulfills the conditions of step 0 of NC-FISTA in view of the fact that $\bar M \ge \bar m$.
	However, NC-FISTA also allows for the possibility that a sharper scalar $m \in [\bar m,M)$ is known due to
	the fact that  its iteration-complexity bound improves as $m$ decreases (see Theorem \ref{thm:main}).
	Eighth, when $ f $ is convex, i.e., $ \bar m=0 $, NC-FISTA reduces to FISTA if  $m$ is set to zero.
	Finally, \eqref{ak} implies that
	\begin{eqnarray}\label{Ak+1ak}
		A_{k+1} = a_k^2.
	\end{eqnarray}
	
	We establish a number of technical results. The first one establishes an important inequality satisfied by  $ m $.
	
	\begin{lemma}\label{taukproperty}
		For $ k\ge 0 $, we have
		\[
		\frac{\bar m}{\kappa_0} + \frac{ m}{a_k} \leq  m.
		\]
	\end{lemma}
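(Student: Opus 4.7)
The plan is to reduce the claim to a monotonicity statement about the sequence $\{a_k\}$ after expressing $\kappa_0$ in a simpler form.

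First, I would rewrite $\kappa_0$ in terms of $a_0$. From \eqref{ak} we have $a_0 = (1+\sqrt{1+4A_0})/2$, so $\sqrt{1+4A_0}-1 = 2(a_0-1)$ and $1+\sqrt{1+4A_0} = 2a_0$. Substituting these into \eqref{eq:kappa} gives the identity
\[
\kappa_0 \;=\; \frac{a_0}{a_0-1}.
\]
Since $A_0 > 0$ we have $\sqrt{1+4A_0} > 1$, hence $a_0 > 1$, so the denominator is positive and $\kappa_0 > 0$.

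Next, using the standing assumption $m \ge \bar m$ from step~0 of NC-FISTA, the desired inequality follows from
\[
\frac{\bar m}{\kappa_0} + \frac{m}{a_k} \;\le\; m\left(\frac{1}{\kappa_0} + \frac{1}{a_k}\right) \;\le\; m,
\]
so it suffices to prove $1/\kappa_0 + 1/a_k \le 1$, equivalently $\kappa_0 \ge a_k/(a_k-1)$. Substituting the formula for $\kappa_0$, this reduces to $a_0/(a_0-1) \ge a_k/(a_k-1)$. The function $t \mapsto t/(t-1) = 1 + 1/(t-1)$ is strictly decreasing on $(1,\infty)$, so this is in turn equivalent to $a_k \ge a_0$.

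Finally, I would invoke the fourth remark following NC-FISTA: relation \eqref{ak} together with $A_{k+1} = A_k + a_k > A_k$ implies that $\{A_k\}$ and hence $\{a_k\}$ are strictly increasing, so $a_k \ge a_0 > 1$ for every $k \ge 0$. Combining this with the decreasing behavior of $t/(t-1)$ yields $\kappa_0 \ge a_k/(a_k-1)$, which gives the claim. There is really no hard step here; the only subtlety is recognizing the clean identity $\kappa_0 = a_0/(a_0-1)$, after which monotonicity of $\{a_k\}$ does all the work.
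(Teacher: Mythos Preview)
Your proof is correct and follows essentially the same approach as the paper's own proof: use $m\ge\bar m$ to reduce to the inequality $1/\kappa_0+1/a_k\le 1$, relate $\kappa_0$ to $a_0$ (the paper computes $1-1/\kappa_0=1/a_0$ directly, you equivalently write $\kappa_0=a_0/(a_0-1)$), and finish with the monotonicity $a_k\ge a_0$. The only cosmetic difference is that the paper writes the chain $m-\bar m/\kappa_0\ge(1-1/\kappa_0)m=m/a_0\ge m/a_k$ in one line, whereas you detour through the decreasing function $t\mapsto t/(t-1)$; both amount to the same computation.
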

	\begin{proof}
		Using the assumption $ m\ge \bar m $, the definition of $ \kappa_0 $ in \eqref{eq:kappa},  relation \eqref{ak} with $k=0$, and the fact that $\{a_k\}$ is increasing, we conclude that for every $ k\ge 0 $,
		\[
		m-\frac{\bar m}{\kappa_0} \ge  \left( 1-\frac{1}{\kappa_0}\right)  m=\frac{2 m}{1+\sqrt{1+4A_0}}=\frac{m}{a_0}\ge \frac{m}{a_k}.
		\]
	\end{proof}
	
	The following results introduce two functions that play important roles in our analysis of NC-FISTA and establish some basic facts about them.
	
	\begin{lemma}\label{gammatildegamma}
		For every $k \ge 0$, if we define
		\begin{align}
			& \tilde{\gamma}_k(u) :=  \ell_f(u; \tilde{x}_k) + h(u) + \frac{\kappa_0 m}{2a_k} \| u - \tilde{x}_k \|^2, \label{def1} \\
			& \gamma_k(u) := \tilde{\gamma}_k(y_{k+1}) + \frac{1}{\lambda}\langle \tilde{x}_k - y_{k+1}, u - y_{k+1} \rangle + \frac{\kappa_0 m}{2a_k} \| u - y_{k+1} \|^2,  \label{def2}
		\end{align}
		then the following statements hold:
		\begin{itemize}
			\item[(a)] both $\gamma_k$ and $\tilde \gamma_k$ are $(\kappa_0 m/a_k)$-strongly convex functions, $\gamma_k$ minorizes $\tilde{\gamma}_k$, $\tilde{\gamma}_k(y_{k+1}) = \gamma_k(y_{k+1})$,
			\begin{eqnarray}\label{minimizationequality}
				\min_u \left\{ \tilde{\gamma}_k(u) + \frac{1}{2 \lambda}\| u - \tilde{x}_k \|^2 \right\} & = &
				\min_u \left\{ \gamma_k(u) + \frac{1}{2 \lambda}\| u - \tilde{x}_k \|^2 \right\},
			\end{eqnarray}
			and these minimization problems have $y_{k+1}$ as a unique optimal solution; 
			\item[(b)] for every $u  \in \dom h$,
			\[
			\tilde{\gamma}_k(u) -  \phi(u) \le \frac{1}{2}\left(\bar m + \frac{\kappa_0 m}{a_k} \right) \| u - \tilde{x}_k \|^2;
			\]
			\item[(c)]
			$
			x_{k+1} = {\rm{argmin}}_{u\in \Omega} \left\{ a_k \gamma_k(u)  +  \| u - x_k \|^2 /(2\lam) \right\}.
			$
		\end{itemize}
	\end{lemma}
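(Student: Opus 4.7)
The plan is to establish (a) by exploiting the fact that $y_{k+1}$ is the unique minimizer of the strongly convex objective appearing in \eqref{minimize1}, then to obtain (b) directly from the weak-convexity bound \eqref{ineq:curv} applied with $m=\bar m$, and finally to deduce (c) by writing the first-order optimality condition for the unconstrained analogue of the problem and invoking the orthogonal-projection characterization.

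For (a), I would first observe that both $\tilde\gamma_k$ and $\gamma_k$ are $(\kappa_0 m/a_k)$-strongly convex: in each case the affine parts (namely $\ell_f(\cdot;\tx_k)$ or the linear inner product in $u$) contribute nothing, $h$ is convex in the first case, and the final quadratic supplies exactly the stated modulus. The crux is the pointwise bound $\gamma_k\le\tilde\gamma_k$. Rewriting \eqref{minimize1} as $y_{k+1}=\argmin_u\{\tilde\gamma_k(u)+\|u-\tx_k\|^2/(2\lambda)\}$, whose objective is $(\kappa_0 m/a_k+1/\lambda)$-strongly convex, I would apply the standard strong-convexity lower bound at this minimizer and expand $\|u-\tx_k\|^2$ around $y_{k+1}$; after the $(1/(2\lambda))\|y_{k+1}-\tx_k\|^2$ terms cancel, the resulting inequality is precisely $\tilde\gamma_k(u)\ge\gamma_k(u)$, with equality at $u=y_{k+1}$ by construction. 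The identity \eqref{minimizationequality} and the uniqueness of the minimizer then follow at once from this coincidence and the strong convexity of the $\gamma_k$-side. For (b), I would just expand
\[
\tilde\gamma_k(u)-\phi(u)=\bigl[\ell_f(u;\tx_k)-f(u)\bigr]+\frac{\kappa_0 m}{2a_k}\|u-\tx_k\|^2,
\]
and bound the bracketed term by $(\bar m/2)\|u-\tx_k\|^2$ using \eqref{ineq:curv} with $m=\bar m$ applied at $z=\tx_k\in\Omega$ and $z'=u\in\dom h\subseteq\Omega$.

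Part (c) is where the bookkeeping is heaviest, and I expect it to be the main obstacle. I would differentiate $a_k\gamma_k(u)+\|u-x_k\|^2/(2\lambda)$ using the explicit form of $\gamma_k$ in \eqref{def2}, set the gradient to zero, and clear the factor $\lambda$, which yields
\[
a_k(\tx_k-y_{k+1})+\kappa_0 m\lambda(u-y_{k+1})+(u-x_k)=0.
\]
The substantive step is simplifying $x_k-a_k\tx_k$: from \eqref{tildexk} one has $A_{k+1}\tx_k=A_k y_k+a_k x_k$, and \eqref{Ak+1ak} gives $A_{k+1}=a_k^2$, hence $A_k=a_k(a_k-1)$, so $x_k-a_k\tx_k=-(a_k-1)y_k$. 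Substituting this back, the unconstrained minimizer reduces to exactly $\hat x_{k+1}$ of \eqref{minimize2prime}. Since the overall objective is an isotropic quadratic of the form $\tfrac{1}{2}(1/\lambda+\kappa_0 m)\|u-\hat x_{k+1}\|^2+\text{const}$, its minimizer over the closed convex set $\Omega$ coincides with the Euclidean projection of the unconstrained minimizer, giving $x_{k+1}=P_\Omega(\hat x_{k+1})$ as required.
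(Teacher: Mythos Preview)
Your proposal is correct and follows essentially the same route as the paper's proof in all three parts: the same strong-convexity lower bound at $y_{k+1}$ combined with the quadratic expansion of $\|u-\tx_k\|^2$ for (a), the direct use of \eqref{ineq:curv} with $m=\bar m$ for (b), and the unconstrained-minimizer-then-project argument for (c), where you in fact supply more of the algebra (the simplification $x_k-a_k\tx_k=-(a_k-1)y_k$ via \eqref{Ak+1ak}, and the observation that the objective is an isotropic quadratic so that the constrained minimizer is exactly the Euclidean projection) than the paper, which simply asserts it.

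The one loose end is in (a): that $y_{k+1}$ is the unique minimizer of $\gamma_k(\cdot)+\|\cdot-\tx_k\|^2/(2\lambda)$ does \emph{not} follow merely from the coincidence $\gamma_k(y_{k+1})=\tilde\gamma_k(y_{k+1})$, the minorization $\gamma_k\le\tilde\gamma_k$, and strong convexity of the $\gamma_k$-side (these alone allow the $\gamma_k$-side to dip below its value at $y_{k+1}$ elsewhere). You need one more line: either check the first-order condition directly, as the paper does (the gradient of $\gamma_k+\tfrac1{2\lambda}\|\cdot-\tx_k\|^2$ at $y_{k+1}$ is $\tfrac1\lambda(\tx_k-y_{k+1})+\tfrac1\lambda(y_{k+1}-\tx_k)=0$), or note that your own expansion of $\|u-\tx_k\|^2$ around $y_{k+1}$, applied to the explicit quadratic $\gamma_k$, yields the exact identity
\[
\gamma_k(u)+\tfrac1{2\lambda}\|u-\tx_k\|^2=(\tilde\gamma_k+q)(y_{k+1})+\tfrac12\Bigl(\tfrac1\lambda+\tfrac{\kappa_0 m}{a_k}\Bigr)\|u-y_{k+1}\|^2,
\]
which is the paper's \eqref{equality0} and settles both the equality of minima and the uniqueness in one stroke.
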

	\begin{proof}
		(a) It clearly follows from \eqref{def2} that ${\gamma}_k(y_{k+1}) = \tilde \gamma_k(y_{k+1})$. By definitions of $\tilde{\gamma}_k$ and ${\gamma}_k$ in (\ref{def1}) and (\ref{def2}) respectively, they are clearly $(\kappa_0 m/a_k)$-strongly convex.  By (\ref{minimize1}) and the definition of $\tilde{\gamma}_k$  in (\ref{def1}), $y_{k+1}$ is the optimal solution to the first minimization problem in (\ref{minimizationequality}).  Since the objective function of this minimization problem is $[(1/\lam) + (\kappa_0 m/a_k)]$-strongly convex, it follows that for all $u \in \mathbb{R}^n$,
		\begin{equation}\label{inequality0}
			\tilde{\gamma}_k(y_{k+1}) + \frac{1}{2 \lambda} \| y_{k+1} - \tilde{x}_k \|^2 + \frac{1}{2}\left( \frac{1}{\lam}+\frac{\kappa_0 m}{a_k}\right)  \| y_{k+1} - u \|^2 \leq  \tilde{\gamma}_k(u) + \frac{1}{2 \lambda}\| u - \tilde{x}_k \|^2.
		\end{equation}
		On the other hand, the definition of $\gamma_k$ in (\ref{def2}) and the relation
		\begin{eqnarray*}
			\| y_{k+1} - \tilde{x}_k \|^2 + \| y_{k+1} - u \|^2 - \| u - \tilde{x}_k \|^2 
			= 2 \langle \tilde{x}_k - y_{k+1}, u - y_{k+1} \rangle.
		\end{eqnarray*}
		imply that
		\begin{eqnarray}\label{equality0} 
			\tilde{\gamma}_k(y_{k+1}) + \frac{1}{2 \lambda} \| y_{k+1} - \tilde{x}_k \|^2 + \frac{1}{2}\left( \frac{1}{\lam}+\frac{\kappa_0 m}{a_k}\right) \| y_{k+1} - u \|^2 =  \gamma_k(u) + \frac{1}{2 \lambda}\| u - \tilde{x}_k \|^2.
		\end{eqnarray}
		Thus, it follows from (\ref{inequality0}) and (\ref{equality0}) that $\gamma_k \leq \tilde{\gamma}_k$.  
		Noting that the objective function in the second minimization problem in \eqref{minimizationequality} is quadratic and using
		the first order optimality condition, we show that $ y_{k+1} $ is a unique optimal solution to the aforementioned problem.
		
		(b) This statement follows from the assumption (A4) and the definition of $\tilde{\gamma}_k(u)$ in (\ref{def1}). 
		
		(c) Using the expressions for $\tilde{x}_k$ and $\hat x_{k+1}$ in (\ref{tildexk}) and (\ref{minimize2prime}), respectively,
		it is easy to see that $\hat x_{k+1}$ is the (unique) global minimizer of the function $ a_k \gamma_k(u) +  \| u - x_k \|^2/(2\lambda)$ over the
		whole space $\R^n$. The definition of $x_{k+1}$ and the previous observation then imply that
		the conclusion of (c) holds.
	\end{proof}
	
	The following result states a recursive inequality that plays an important role in the convergence rate analysis of NC-FISTA.  
	
	\begin{lemma}\label{invariantinequality}
		For every $u \in \Omega$
		and $k \geq 0$, we have
		\[
		\begin{aligned}
			& \lam A_{k+1} \phi(y_{k+1}) + \frac{\kappa_0 m\lam+1}{2}  \| u - x_{k+1} \|^2 + \frac{(1-\lambda  {\cal C}_k)A_{k+1}}{2} \| y_{k+1} - \tilde{x}_k \|^2  \nonumber \\
			& \leq  \lam A_k \gamma_k(y_k) + \lam a_k \gamma_k(u) + \frac{1}{2} \|u - x_k \|^2,
		\end{aligned}
		\]
		where 
		\[
		{\cal C}_k :=  \frac {2 \left[   f(y_{k+1}) - \ell_f(y_{k+1}; \tx_k) \right]}{\|y_{k+1}-\tx_k\|^2}.
		\]
		
	\end{lemma}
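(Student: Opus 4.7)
The plan is to combine three ingredients: a strong-convexity inequality coming from Lemma \ref{gammatildegamma}(c); an exact quadratic identity for $\gamma_k$; and a conversion from $\phi(y_{k+1})$ to $\gamma_k(y_{k+1})$ using $\mathcal{C}_k$.

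First I would invoke Lemma \ref{gammatildegamma}(c), which identifies $x_{k+1}$ as the minimizer over $\Omega$ of the $(\kappa_0 m+1/\lambda)$-strongly convex function $u\mapsto a_k\gamma_k(u)+\|u-x_k\|^2/(2\lambda)$. The standard strong-convexity bound at the optimum, multiplied through by $\lambda$, yields for every $u\in\Omega$
$$\lambda a_k\gamma_k(x_{k+1})+\tfrac{1}{2}\|x_{k+1}-x_k\|^2+\tfrac{\kappa_0 m\lambda+1}{2}\|u-x_{k+1}\|^2\le\lambda a_k\gamma_k(u)+\tfrac{1}{2}\|u-x_k\|^2,$$
which already matches the $\|u-x_{k+1}\|^2$, $\lambda a_k\gamma_k(u)$, and $\|u-x_k\|^2$ terms of the claim.

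Second, since $\gamma_k$ is a quadratic with Hessian $(\kappa_0 m/a_k)I$ and gradient $\nabla\gamma_k(y_{k+1})=(\tilde x_k-y_{k+1})/\lambda$ (read off from \eqref{def2}), exact Taylor expansions of $\gamma_k(y_k)$ and $\gamma_k(x_{k+1})$ about $y_{k+1}$, combined with weights $A_k$ and $a_k$, produce a closed form for $A_{k+1}\gamma_k(y_{k+1})-A_k\gamma_k(y_k)-a_k\gamma_k(x_{k+1})$. The linear contribution involves the vector $A_k(y_k-y_{k+1})+a_k(x_{k+1}-y_{k+1})$, which by \eqref{tildexk} (equivalently $A_{k+1}\tilde x_k=A_ky_k+a_kx_k$) and $A_{k+1}=a_k^2$ from \eqref{Ak+1ak} collapses to $-A_{k+1}(y_{k+1}-\tilde x_k)+a_k(x_{k+1}-x_k)$.

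Third, the definition \eqref{def1} of $\tilde\gamma_k$, the definition of $\mathcal{C}_k$, and the identity $\tilde\gamma_k(y_{k+1})=\gamma_k(y_{k+1})$ from Lemma \ref{gammatildegamma}(a) give
$$\phi(y_{k+1})=\gamma_k(y_{k+1})+\tfrac{1}{2}\bigl(\mathcal{C}_k-\tfrac{\kappa_0 m}{a_k}\bigr)\|y_{k+1}-\tilde x_k\|^2.$$
Multiplying by $\lambda A_{k+1}=\lambda a_k^2$ converts the left-hand side piece $\lambda A_{k+1}\phi(y_{k+1})+\tfrac{(1-\lambda\mathcal{C}_k)A_{k+1}}{2}\|y_{k+1}-\tilde x_k\|^2$ of the claim into $\lambda A_{k+1}\gamma_k(y_{k+1})+\tfrac{a_k(a_k-\lambda\kappa_0 m)}{2}\|y_{k+1}-\tilde x_k\|^2$.

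Assembling the three steps, the lemma reduces to
$$\lambda\bigl[A_{k+1}\gamma_k(y_{k+1})-A_k\gamma_k(y_k)-a_k\gamma_k(x_{k+1})\bigr]+\tfrac{a_k(a_k-\lambda\kappa_0 m)}{2}\|y_{k+1}-\tilde x_k\|^2\le\tfrac{1}{2}\|x_{k+1}-x_k\|^2.$$
Plugging in the second-step identity and completing the square in the pair $x_{k+1}-x_k$, $a_k(y_{k+1}-\tilde x_k)$, the difference ``RHS $-$ LHS'' rewrites as a sum of four manifestly non-positive terms, namely $-\tfrac{1}{2}\|x_{k+1}-x_k-a_k(y_{k+1}-\tilde x_k)\|^2$ together with non-positive multiples of $\|y_{k+1}-\tilde x_k\|^2$, $\|y_k-y_{k+1}\|^2$, and $\|x_{k+1}-y_{k+1}\|^2$, which settles the inequality. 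The main obstacle is purely bookkeeping: using $A_{k+1}=a_k^2$ consistently so that the coefficient $(1-\lambda\mathcal{C}_k)A_{k+1}/2$ appearing in the statement combines cleanly with the $-A_{k+1}/\lambda$ contribution generated by the quadratic identity.
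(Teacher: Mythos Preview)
Your approach is correct in substance, but the final paragraph has a sign slip: the difference $\text{RHS}-\text{LHS}$ of your reduced inequality is a sum of four manifestly \emph{non-negative} terms, namely
\[
\tfrac{1}{2}\|x_{k+1}-x_k-a_k(y_{k+1}-\tilde x_k)\|^2
+\tfrac{\lambda\kappa_0 m a_k}{2}\|y_{k+1}-\tilde x_k\|^2
+\tfrac{\lambda\kappa_0 m A_k}{2a_k}\|y_k-y_{k+1}\|^2
+\tfrac{\lambda\kappa_0 m}{2}\|x_{k+1}-y_{k+1}\|^2,
\]
which is exactly what establishes $\text{LHS}\le\text{RHS}$. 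With that correction, your argument is complete.

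Your route differs from the paper's. The paper never expands $\gamma_k$ as an exact quadratic; it uses only the convexity of $\gamma_k$ together with the optimality of $y_{k+1}$ for $\min_u\{\gamma_k(u)+\|u-\tilde x_k\|^2/(2\lambda)\}$, evaluated at the test point $w=(A_ky_k+a_kx_{k+1})/A_{k+1}$. By \eqref{tildexk} and $A_{k+1}=a_k^2$ one has $\|w-\tilde x_k\|^2=\|x_{k+1}-x_k\|^2/A_{k+1}$, and convexity splits $A_{k+1}\gamma_k(w)\le A_k\gamma_k(y_k)+a_k\gamma_k(x_{k+1})$; the strong-convexity step from Lemma~\ref{gammatildegamma}(c) is applied last. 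This is the standard ACG ``convex combination'' trick and would work for any convex minorant $\gamma_k$. Your argument instead exploits that $\gamma_k$ is exactly quadratic, trading conceptual economy for slightly sharper bookkeeping: you never discard the $-(\kappa_0 m\lambda/(2a_k))\|y_{k+1}-\tilde x_k\|^2$ term that the paper drops in its first display \eqref{derivation}.
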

	\begin{proof}
		Using the definition of $ {\cal C}_k$, \eqref{def1} and Lemma \ref{gammatildegamma}(a), we conclude that
		\begin{align}
			\lam \phi(y_{k+1}) +& \frac{1-\lam  {\cal C}_k}{2} \| y_{k+1} - \tilde{x}_k \|^2  =  \lam \tilde{\gamma}_k(y_{k+1}) + \left( \frac{1}{2}- \frac{\kappa_0 m\lam}{2a_k}\right)  \| y_{k+1} - \tilde{x}_k \|^2 \nonumber \\
			& \le  \lam {\tilde \gamma}_k(y_{k+1}) + \frac{1}{2}  \| y_{k+1} - \tilde{x}_k \|^2
			=  \lam {\gamma}_k(y_{k+1}) + \frac{1}{2} \| y_{k+1} - \tilde{x}_k \|^2. \label{derivation}
		\end{align}
		On the other hand, using the fact that $\gamma_k$ is convex,
		$y_{k+1}$ is an optimal solution of  (\ref{minimizationequality}),
		and relations \eqref{tildexk} and (\ref{Ak+1ak}), we conclude that for every $u \in \Omega$,
		\begin{align} 
			&  A_{k+1} \left( \lam {\gamma}_k(y_{k+1}) + \frac{1}{2} \| y_{k+1} - \tilde{x}_k \|^2 \right) \nonumber \\
			& \leq  A_{k+1} \left( \lam {\gamma}_k \left(\frac{A_k y_k + a_k x_{k+1}}{A_{k+1}} \right) + \frac{1}{2} \left\| \frac{A_k y_k + a_k x_{k+1}}{A_{k+1}} - \tilde{x}_k \right\|^2 \right) \nonumber \\
			& \leq  \lam A_k \gamma_k(y_k) + \lam a_k \gamma_k(x_{k+1}) +  \frac{A_{k+1}}{2} \left\| \frac{A_k y_k + a_k x_{k+1}}{A_{k+1}} - \tilde{x}_k \right\|^2 \nonumber \\
			& = \lam A_k \gamma_k(y_k) + \lam a_k \gamma_k(x_{k+1}) +  \frac{1}{2} \| x_{k+1} - x_k \|^2 \nonumber \\
			& \leq  \lam {A_k}\gamma_k(y_k) + \lam a_k \gamma_k(u) +  \frac{1}{2} \| u - x_k \|^2 - \frac{\kappa_0 m\lam + 1}{2} \| u - x_{k+1} \|^2,
			\label{derivation2}
		\end{align}
		where the last inequality follows from Lemma \ref{gammatildegamma}(c), the fact that $\gamma_k$ is $(\kappa_0 m/a_k)$-strongly convex
		in view of Lemma \ref{gammatildegamma}(a),
		and hence that
		$\lam a_k \gamma_k(u) +  \| u - x_k \|^2/2$ is $(\kappa_0 m\lam + 1)$-strongly convex.
		The result now follows by combining (\ref{derivation}) and (\ref{derivation2}).
	\end{proof}

	\begin{lemma}\label{consequenceproposition}
		For every $k \geq 1$ and $u \in \dom h$, we have
		\begin{align}
			\sum_{i=0}^{k-1} (1-\lam  {\cal C}_i)A_{i+1} &\| y_{i+1} - \tilde{x}_i \|^2
			\leq  2\lam A_0(\phi(y_0) - \phi(u)) -2\lam A_{k}(\phi(y_{k}) - \phi(u)) \nonumber \\
			& + (\kappa_0 m\lam+1) \left( \|u-x_0\|^2 -  \| u - x_k\|^2 \right)  + \kappa_0 m\lam D^2_h k +
			\bar m \lam D^2_h\sum_{i=0}^{k-1} a_i.\label{inequality3}
		\end{align}
	\end{lemma}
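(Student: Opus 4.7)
The plan is to sum the inequality in Lemma~\ref{invariantinequality} over $i=0,1,\ldots,k-1$ and telescope. Before summing, I would use Lemma~\ref{gammatildegamma}(a)--(b) to replace the $\gamma_i$ values on the right-hand side by $\phi$ values plus quadratic errors: since $\gamma_i\le\tilde\gamma_i$ by Lemma~\ref{gammatildegamma}(a), Lemma~\ref{gammatildegamma}(b) gives, for every $w\in\dom h$,
\[
\gamma_i(w)\;\le\;\phi(w)\;+\;\frac12\Bigl(\bar m+\frac{\kappa_0 m}{a_i}\Bigr)\|w-\tilde x_i\|^2.
\]
I apply this with $w=y_i$ and with $w=u$, both of which lie in $\dom h$. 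A useful boundary observation from \eqref{tildexk} is $\tilde x_0=y_0$, so the $i=0$ instance for $w=y_0$ collapses to the exact bound $\gamma_0(y_0)\le\phi(y_0)$.

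Substituting these bounds into the inequality of Lemma~\ref{invariantinequality} (multiplied by $2$), and using $A_{i+1}=A_i+a_i$ to combine the $\phi(u)$ contributions, the inequality rearranges into the ``descent'' form
\[
W_{i+1}-W_i+(1-\lam {\cal C}_i)A_{i+1}\|y_{i+1}-\tilde x_i\|^2+\kappa_0 m\lam\|u-x_i\|^2\;\le\;\lam\Bigl(\bar m+\frac{\kappa_0 m}{a_i}\Bigr)\bigl(A_i\|y_i-\tilde x_i\|^2+a_i\|u-\tilde x_i\|^2\bigr),
\]
where $W_i:=2\lam A_i(\phi(y_i)-\phi(u))+(\kappa_0 m\lam+1)\|u-x_i\|^2$. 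Summing $i=0,\ldots,k-1$ telescopes $W_i$ to $W_k-W_0$; moved to the right-hand side, this yields precisely the $\phi$ difference and the $(\kappa_0 m\lam+1)$-weighted $\|u-x\|^2$ difference appearing on the first two lines of the target.

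The remaining task is to control the error sums. Decomposing $u-\tilde x_i=(A_i/A_{i+1})(u-y_i)+(a_i/A_{i+1})(u-x_i)$ from \eqref{tildexk}, convexity of $\|\cdot\|^2$ together with $\|u-y_i\|\le D_h$ gives $\|u-\tilde x_i\|^2\le D_h^2+\|u-x_i\|^2/a_i$. Multiplying by $\lam a_i(\bar m+\kappa_0 m/a_i)=\lam(\bar m a_i+\kappa_0 m)$ and summing produces exactly $\bar m\lam D_h^2\sum a_i+\kappa_0 m\lam D_h^2 k$ (the last two target terms) plus a residual $\sum_i\lam(\bar m+\kappa_0 m/a_i)\|u-x_i\|^2$. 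Lemma~\ref{taukproperty}, rewritten equivalently as $\bar m+\kappa_0 m/a_i\le\kappa_0 m$, bounds this residual by $\kappa_0 m\lam\sum\|u-x_i\|^2$, which is exactly cancelled by the $\kappa_0 m\lam\|u-x_i\|^2$ terms left on the other side by the telescoping.

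The most delicate step will be the analogous treatment of the $\lam A_i(\bar m+\kappa_0 m/a_i)\|y_i-\tilde x_i\|^2$ error, since $\tilde x_i$ need not lie in $\dom h$. The key tool here is the identity $y_i-\tilde x_i=(a_i/A_{i+1})(y_i-x_i)$ from \eqref{tildexk}, which kills the $i=0$ term (as $x_0=y_0$) and, combined with the expression for $\hat x_i-y_i$ derived from \eqref{minimize2prime} and the non-expansiveness of $P_\Omega$ (recalling $y_i\in\dom h\subseteq\Omega$), yields a contribution of the same form as before that is once more absorbed through Lemma~\ref{taukproperty}.
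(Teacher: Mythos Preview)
Your telescoping scheme and the use of Lemma~\ref{gammatildegamma}(a)--(b) together with Lemma~\ref{taukproperty} mirror the paper's proof exactly; the divergence is entirely in how you handle the two quadratic errors $A_i\|y_i-\tilde x_i\|^2$ and $a_i\|u-\tilde x_i\|^2$, and there is a genuine gap in your last paragraph.

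The paper does \emph{not} treat the two errors separately. It uses the parallelogram-type identity (see \eqref{ineq:tech})
\[
A_i\|y_i-\tilde x_i\|^2+a_i\|u-\tilde x_i\|^2
\;=\;\|u-x_i\|^2+\frac{A_ia_i}{A_{i+1}}\|y_i-u\|^2
\;\le\;\|u-x_i\|^2+a_iD_h^2,
\]
which follows from \eqref{tildexk} and $A_{i+1}=a_i^2$. Your convexity bound $\|u-\tilde x_i\|^2\le D_h^2+\|u-x_i\|^2/a_i$ is exactly this identity with the term $A_i\|y_i-\tilde x_i\|^2$ discarded from the left side. That discarded term is precisely what is needed to cancel the $\gamma_i(y_i)$ error $\lam A_i(\bar m+\kappa_0 m/a_i)\|y_i-\tilde x_i\|^2$; in the paper the two combine for free, so no separate bound on $\|y_i-\tilde x_i\|$ is ever required.

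Your proposed separate treatment does not close the gap. From \eqref{minimize2prime} and non-expansiveness of $P_\Omega$ one only gets
\[
\|y_i-x_i\|\le\|\hat x_i-y_i\|=\frac{a_{i-1}-1}{\kappa_0 m\lam+1}\,\|y_i-y_{i-1}\|\le\frac{a_{i-1}-1}{\kappa_0 m\lam+1}\,D_h,
\]
which grows like $i$. Hence $A_i\|y_i-\tilde x_i\|^2=(A_i/A_{i+1})\|y_i-x_i\|^2$ is of order $i^2D_h^2$, and after multiplying by $\lam(\bar m+\kappa_0 m/a_i)\le\kappa_0 m\lam$ and summing over $i=1,\dots,k-1$ you obtain a contribution of order $k^3$, whereas the target \eqref{inequality3} allows only $\bar m\lam D_h^2\sum a_i=\mathcal O(k^2)$ and $\kappa_0 m\lam D_h^2 k$. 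There is also no surviving $\kappa_0 m\lam\|u-x_i\|^2$ budget on the left to absorb anything further, since that budget was already consumed by the $\|u-\tilde x_i\|^2$ residual. The fix is simply to upgrade your convexity inequality to the exact identity above; then the $y_i$-error disappears automatically and no projection argument is needed.
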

	
	\begin{proof}
		Let $i \geq 0$ and $u \in \dom h$ be given.  
		It follows from Lemma \ref{gammatildegamma}(a)-(b) that we have 
		\begin{align}
			\gamma_i(u)-\phi(u)&\le \tilde \gamma_i(u)-\phi(u)
			\le \frac12\left( \bar m+\frac{\kappa_0 m}{a_i}\right) \|u-\tx_i\|^2. \label{ineq:diff3}
		\end{align}
		Note that for every $ A, a \in \R_+ $ and $ x, y\in \R^n $, we have
		\[
		A\|y\|^2+a\|x\|^2=(A+a)\left\| \frac{Ay+ax}{A+a}\right\|^2 + \frac{Aa}{A+a}\|y-x\|^2.
		\]
		Applying the above identity with $ A=A_i $, $ a=a_i $, $ y=y_i-\tx_i $ and $ x = u-\tx_i $, and using  the definition of $\tilde{x}_i$ in \eqref{tildexk} and the relation \eqref{Ak+1ak},  we obtain
		\begin{align}
			A_i \| y_i - \tilde{x}_i \|^2& + a_i \| u - \tilde{x}_i\|^2 
			=  A_{i+1}\left\|\frac{A_iy_i+a_iu}{A_{i+1}}- \tx_i\right\|^2 + \frac{A_ia_i}{A_{i+1}}\|y_i-u\|^2  \nn \\
			& = \|u-x_i\|^2 + \frac{A_ia_i}{A_{i+1}}\|y_i-u\|^2 
			\le \|u - x_i \|^2 + a_i D^2_h.  \label{ineq:tech}
		\end{align}
		where the inequality follows from the fact that $A_{i+1} = A_i + a_i \ge A_i$ due to \eqref{ak} and the definition of $ D_h $ in \eqref{eq:diam}.
		
		Now,
		using Lemma \ref{invariantinequality}, relations \eqref{ak}, \eqref{ineq:diff3} and \eqref{ineq:tech}, and some simple algebraic manipulations,
		we conclude that for every $i \ge 0$,
		\begin{align*}
			(1 -\lam  {\cal C}_i ) A_{i+1}&\| y_{i+1} - \tilde{x}_i \|^2 + (\kappa_0 m\lam +1) \| u - x_{i+1} \|^2  -  \| u - x_{i} \|^2\\
			&+ 2\lam A_{i+1} (\phi(y_{i+1}) - \phi(u)) -  2\lam A_{i}(\phi(y_{i}) - \phi(u)) \\
			&   \le  2\lam A_i(\gamma_i(y_i) - \phi(y_i)) + 2\lam a_i(\gamma_i(u) - \phi(u)) \nonumber \\
			& \le \lam\left( \bar m + \frac{\kappa_0 m}{a_i}\right) \left( A_i\|y_i-\tx_i\|^2 + a_i\|u-\tx_i\|^2\right) \nonumber \\
			& \le \lam\left( \bar m + \frac{\kappa_0 m}{a_i}\right) \left(\|u-x_i\|^2 + a_i D_h^2\right)  \nonumber \\
			& = \lam\left( \bar m + \frac{\kappa_0 m}{a_i}\right) \|u-x_i\|^2 + (\bar m a_i + \kappa_0 m)\lam D_h^2.
		\end{align*}
		It follows from the above inequality and Lemma \ref{taukproperty} that
		\begin{align*}
			(1 -\lam  {\cal C}_i)  A_{i+1} & \| y_{i+1} - \tilde{x}_i \|^2 
			+ 2\lam A_{i+1} (\phi(y_{i+1}) - \phi(u)) + (\kappa_0 m \lam+1) \| u - x_{i+1} \|^2  \\
			& \le  2\lam A_{i}(\phi(y_{i}) - \phi(u)) 
			+ (\kappa_0 m \lam+1) \|u - x_i \|^2  + (\bar m a_i + \kappa_0 m)\lam D^2_h.
		\end{align*}
		Inequality (\ref{inequality3}) now follows by summing the above inequality from $i=0$ to $i=k-1$ and rearranging terms.
	\end{proof}
	
	The following result develops a convergence rate bound for the quantity $\min_{1 \leq i \leq k} \| v_i \|^2$.
	In view of the stopping criterion in step 3 of NC-FISTA, it plays a crucial role in establishing  an iteration-complexity bound for
	NC-FISTA in Theorem \ref{thm:main}.

	\begin{proposition}\label{lem:convergence}
		Consider the sequences $\{y_k\}$ and $\{v_k\}$ generated by NC-FISTA according to \eqref{minimize1} and \eqref{def:v}, respectively.
		Then, for every $k \ge 1$,
		\begin{equation}\label{incl}
			v_{k} \in \nabla f(y_{k}) + \partial h(y_{k})
		\end{equation}
		and
		\begin{equation}
			\min_{1 \leq i \leq k} \| v_i \|^2
			\le \frac{4(2M+\kappa_0 m)^2}{M-\bar M}
			\left( \frac{ \bar m D^2_h}{k} + \frac{3\kappa_0 m  D^2_h}{k^2} + \frac{3 \left[  2 A_0 (\phi(y_0) - \phi_\ast) + (\kappa_0 m +M) d_0^2\right]}{k^3} \right) \label{eq:convegence rate}
		\end{equation}
		where $M$, $m$, $ \kappa_0 $ and $A_0$ are as described in step 0 of NC-FISTA,
		$D_h$ is defined in \eqref{eq:diam}, $ \bar M $ and $\bar m$ are defined in the paragraph following assumptions (A1)-(A4), and 
		\begin{equation}\label{eq:d0}
			d_0  :=   \inf_{z^\ast \in Z^\ast} \| z^\ast - y_0 \| = \inf_{z^\ast \in Z^\ast} \| z^\ast - x_0 \|.
		\end{equation}	
	\end{proposition}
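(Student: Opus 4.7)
The plan is to prove the inclusion \eqref{incl} by reading off the first-order optimality condition of the strongly convex subproblem \eqref{minimize1}, and then to convert the rate bound on $\min_i\|v_i\|^2$ into a rate bound on $\min_i\|y_{i+1}-\tilde x_i\|^2$ that can be attacked by Lemma~\ref{consequenceproposition}. Concretely, the optimality condition for \eqref{minimize1} gives
\[
0 \in \nabla f(\tilde x_k) + \partial h(y_{k+1}) + \left(\tfrac{1}{\lambda}+\tfrac{\kappa_0 m}{a_k}\right)(y_{k+1}-\tilde x_k),
\]
so adding and subtracting $\nabla f(y_{k+1})$ identifies the expression in \eqref{def:v} as an element of $\nabla f(y_{k+1})+\partial h(y_{k+1})$, yielding \eqref{incl}. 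Using the triangle inequality in \eqref{def:v} together with the $\bar M$-Lipschitzness of $\nabla f$, the fact that $a_k\ge 1$ (ensured by $A_0>0$), and $\bar M\le M$, one obtains $\|v_{k+1}\| \le (2M+\kappa_0 m)\|\tilde x_k - y_{k+1}\|$, hence
\[
\min_{1\le i\le k}\|v_i\|^2 \le (2M+\kappa_0 m)^2 \min_{0\le i\le k-1}\|y_{i+1}-\tilde x_i\|^2.
\]

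Next I would estimate the curvature quantity $\mathcal{C}_i$: since $\nabla f$ is $\bar M$-Lipschitz on $\Omega$, one has $f(y_{i+1})-\ell_f(y_{i+1};\tilde x_i)\le (\bar M/2)\|y_{i+1}-\tilde x_i\|^2$, so $\mathcal C_i\le \bar M$ and $1-\lambda\mathcal C_i \ge (M-\bar M)/M$ since $\lambda=1/M$. Substituting $u=z^\ast\in Z^\ast$ in Lemma~\ref{consequenceproposition}, discarding the two nonpositive terms $-2\lambda A_k(\phi(y_k)-\phi_\ast)$ and $-(\kappa_0 m\lambda+1)\|z^\ast-x_k\|^2$, and then taking the infimum over $z^\ast\in Z^\ast$ to introduce $d_0^2$ via \eqref{eq:d0}, I would obtain the aggregate bound
\[
\frac{M-\bar M}{M}\sum_{i=0}^{k-1}A_{i+1}\|y_{i+1}-\tilde x_i\|^2 \le \frac{1}{M}\Bigl[2A_0(\phi(y_0)-\phi_\ast)+(\kappa_0 m+M)d_0^2+\kappa_0 m D_h^2\,k+\bar m D_h^2\sum_{i=0}^{k-1}a_i\Bigr].
\]

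Finally I would use standard FISTA-style estimates for the recursion \eqref{ak}--\eqref{Ak+1ak}: from $A_{i+1}=a_i^2$ and $A_{i+1}=A_i+a_i$ one gets $\sqrt{A_{i+1}}-\sqrt{A_i}\ge 1/2$, whence $A_{i+1}\ge (i+1)^2/4$, so $\sum_{i=0}^{k-1}A_{i+1}=\Theta(k^3)$; the telescoping identity $\sum_{i=0}^{k-1}a_i=A_k-A_0$ together with $A_k=O(k^2)$ handles the $\bar m$ term. Dividing the aggregated inequality by $\sum A_{i+1}$ to pass from a weighted average to a minimum, then multiplying by $(2M+\kappa_0 m)^2$ to go back to $\|v_i\|^2$, produces the three-term bound with $1/k$, $1/k^2$, $1/k^3$ rates in \eqref{eq:convegence rate}.

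The main obstacle is the careful bookkeeping: absorbing $\kappa_0 m/a_k$ into $\kappa_0 m$ (using $a_k\ge 1$), tracking the $\lambda=1/M$ factors so that $(\kappa_0 m\lambda+1)$ combines into $(\kappa_0 m+M)/M$, and ensuring that the $\bar m D_h^2 A_k$ contribution, which grows as $k^2$, produces exactly the leading $\bar m D_h^2/k$ term after division by $\sum_{i=0}^{k-1}A_{i+1}\sim k^3$. Once the coefficients are matched, the stated bound follows directly.
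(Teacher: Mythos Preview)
Your proposal is correct and follows essentially the same route as the paper: optimality of \eqref{minimize1} gives \eqref{incl}; the triangle inequality in \eqref{def:v} with $\bar M$-Lipschitzness, $a_k\ge a_0>1$, and $\lambda=1/M$ yields $\|v_{k+1}\|\le(2M+\kappa_0 m)\|y_{k+1}-\tilde x_k\|$; and Lemma~\ref{consequenceproposition} with $u=z^\ast$ (the paper picks a minimizer attaining $d_0$, using compactness of $Z^\ast$) plus $\mathcal C_i\le\bar M$ produces the aggregate inequality you wrote. The only cosmetic difference is that the paper does not spell out the FISTA growth estimates but instead invokes Lemma~A.1 of \cite{jliang2018double} to turn $\sum_{i=0}^{k-1}A_{i+1}$ and $\sum_{i=0}^{k-1}a_i$ into the precise constants $4$, $3$, $3$ appearing in \eqref{eq:convegence rate}.
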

	
	\begin{proof}
		The first conclusion \eqref{incl} follows from the optimality condition of \eqref{minimize1} and \eqref{def:v}. 
		Next we show the convergence rate bound \eqref{eq:convegence rate} holds.
		First note that $ A_0>0 $ and the relation \eqref{ak} with $k=0$ imply that $a_0 > 1$.
		The assumptions that $\nabla f$ is $\bar M$-Lipschitz continuous (see (A3)), $ M > \bar M$ and
		$\lambda=1/M$ (see step 0 of NC-FISTA), relation (\ref{vk}) and the fact that $\{a_k\}$ is increasing
		then imply that
		\begin{align}\label{inequality11}
			\min_{1 \leq i \leq k} \| v_i \|^2 
			\le \left( \frac{1}{\lambda} + \frac{\kappa_0 m}{a_0}+ \bar M \right)^2 \min_{0 \leq i \leq k-1} \| y_{i+1} - \tilde{x}_i \|^2
			\le (2M+\kappa_0 m)^2 \min_{0 \leq i \leq k-1} \| y_{i+1} - \tilde{x}_i \|^2  .
		\end{align} 
		Moreover, due to the first remark after assumptions (A1)-(A4), there exists $z^* \in Z^*$ such that $\|z^*-x_0\|=d_0$.
		Noting that $z^* \in \dom h$, and using Lemma \ref{consequenceproposition} with $u = z^*$, the fact that $ {\cal C}_k \le \bar M $ for $ k\ge 0 $ and
		$\lambda=1/M$, we conclude that
		\begin{align}
			\frac{M-\bar M}{M}&  \left( \sum_{i=0}^{k-1} A_{i+1}\right) \min_{0 \leq i \leq k-1} \| y_{i+1} - \tilde{x}_i \|^2 
			\le  \sum_{i=0}^{k-1} \left( (1-\lam  {\cal C}_i)A_{i+1} \| y_{i+1} - \tilde{x}_i \|^2\right)  \nonumber \\
			& \le  2\lam A_0 (\phi(y_0) - \phi_\ast)  + \left( \kappa_0 m\lam +1 \right) d_0^2 + \kappa_0 m \lam D^2_h k+ \bar m \lam D^2_h \sum_{i=0}^{k-1} a_i\nonumber \\
			& = \frac1M\left[ 2 A_0 (\phi(y_0) - \phi_\ast)  + \left( \kappa_0 m +M \right) d_0^2 + \kappa_0 m  D^2_h k+ \bar m  D^2_h \sum_{i=0}^{k-1} a_i\right] . \nonumber
		\end{align}
		The bound \eqref{eq:convegence rate} now follows by combining \eqref{inequality11} with  the above inequality and using Lemma A.1 in \cite{jliang2018double}.
	\end{proof}
	
	The following theorem presents the main result of this subsection. It describes an iteration-complexity bound for NC-FISTA involving
	both parameters $M$ and $ m$ as described in its step~0.
	
	\begin{theorem}\label{thm:main}
		Assume that the scalars $M$ and $A_0$ in step 0 of NC-FISTA are such that
		\begin{equation}\label{eq:require}
			\frac{M}{M-\bar M}=\mathcal{O}(1), \quad A_0={\Theta}(1).
		\end{equation}
		Then, NC-FISTA outputs a $\hat \rho$-approximate solution $(\hat y, \hat v)$  in at most
		\begin{equation}\label{eq:complexity}
			\mathcal{O}\left( \left(\frac{M\left(  \phi(y_0)-\phi_*\right)  + M^2d_0^2 }{ \hat{\rho}^2} \right)^{1/3} + \left(\frac{Mm D^2_h}{ \hat{\rho}^2} \right)^{1/2} + \frac{M \bar m D^2_h}{ {\hat{\rho}}^2}  +1 \right)
		\end{equation}
		iterations 
		where 
		$m$ is as in step 0 of NC-FISTA,
		$D_h$ is defined in \eqref{eq:diam}, $\bar m$ is defined in the paragraph following assumptions (A1)-(A4), and
		$d_0$ is defined in \eqref{eq:d0}.
	\end{theorem}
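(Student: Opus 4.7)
The plan is to reduce the theorem to a straightforward inversion of the convergence rate bound \eqref{eq:convegence rate} from Proposition \ref{lem:convergence}, once the various constants are absorbed using the hypotheses $A_0 = \Theta(1)$ and $M/(M-\bar M) = \mathcal{O}(1)$.

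First I would unpack the prefactor in \eqref{eq:convegence rate}. Since $A_0 = \Theta(1)$, the definition $\kappa_0 = (1+\sqrt{1+4A_0})/(\sqrt{1+4A_0}-1)$ in \eqref{eq:kappa} yields $\kappa_0 = \Theta(1)$. Combined with $m \le M$ (from step 0) and $M/(M-\bar M) = \mathcal{O}(1)$, this gives
\[
\frac{4(2M + \kappa_0 m)^2}{M - \bar M} \;=\; \mathcal{O}\!\left(\frac{M^2}{M - \bar M}\right) \;=\; \mathcal{O}(M).
\]
Substituting into \eqref{eq:convegence rate} and using $\kappa_0 m + M = \mathcal{O}(M)$ and $A_0 = \mathcal{O}(1)$ on the third term, we obtain the simplified bound
\[
\min_{1 \le i \le k} \|v_i\|^2 \;\le\; \mathcal{O}\!\left( \frac{M \bar m D_h^2}{k} + \frac{M m D_h^2}{k^2} + \frac{M(\phi(y_0)-\phi_*) + M^2 d_0^2}{k^3} \right).
\]

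Next I would invert this bound against the stopping test $\|v_{k+1}\| \le \hat\rho$ of step 3. It suffices to force each of the three terms on the right to be at most $\hat\rho^2/3$ (say), which by elementary algebra requires respectively
\[
k \;\ge\; \Omega\!\left(\frac{M \bar m D_h^2}{\hat\rho^2}\right),\qquad
k \;\ge\; \Omega\!\left(\!\left(\frac{M m D_h^2}{\hat\rho^2}\right)^{\!1/2}\right),\qquad
k \;\ge\; \Omega\!\left(\!\left(\frac{M(\phi(y_0)-\phi_*) + M^2 d_0^2}{\hat\rho^2}\right)^{\!1/3}\right).
\]
Taking $k$ equal to the sum of these three quantities (plus an extra $+1$ to handle the ceiling and the fact that the stopping check occurs at iteration $k$ rather than $k-1$) guarantees $\|v_{k+1}\|^2 \le \hat\rho^2$, at which point the algorithm terminates with a valid $\hat\rho$-approximate solution in view of the inclusion \eqref{incl} established in Proposition~\ref{lem:convergence}. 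This sum matches exactly the expression \eqref{eq:complexity}.

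There is no real obstacle here; the work is all in Proposition \ref{lem:convergence}. The only points that need a little care are verifying that $\kappa_0 = \Theta(1)$ under the hypothesis $A_0 = \Theta(1)$, and tracking that the prefactor $M/(M-\bar M)$ collapses cleanly into an $\mathcal{O}(M)$ multiplier on each of the three terms. Once these observations are in place, the three-term bound \eqref{eq:complexity} reads off directly from the three decay rates $1/k,\ 1/k^2,\ 1/k^3$ appearing in \eqref{eq:convegence rate}.
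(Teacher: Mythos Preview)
Your proposal is correct and follows essentially the same approach as the paper: observe that $A_0=\Theta(1)$ forces $\kappa_0=\Theta(1)$, then invoke Proposition~\ref{lem:convergence}, absorb the prefactor $4(2M+\kappa_0 m)^2/(M-\bar M)$ into $\mathcal{O}(M)$ via $m\le M$ and $M/(M-\bar M)=\mathcal{O}(1)$, and invert the three decay rates. The paper's own proof is considerably terser but proceeds along exactly these lines.
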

	
	
	\begin{proof}
		Using the assumption that $A_0={\Theta}(1)$ and the definition of $ \kappa_0 $ in \eqref{eq:kappa}, we easily see that
		$ \kappa_0={\Theta}(1) $.
		The iteration-complexity bound in \eqref{eq:complexity} follows immediately from the second result in Proposition \ref{lem:convergence} (see \eqref{eq:convegence rate}), \eqref{eq:require},
		the stopping criterion in step 3 of NC-FISTA, and the facts that $ M\ge m $
		(see step 0 of NC-FISTA) and $ \kappa_0=\Theta (1) $.
	\end{proof}
	
	Note that if a sharper $m \in [\bar m,M]$ is not known and $m$ is simply set to $M$, then \eqref{eq:complexity} reduces to
	\[
	\mathcal{O}\left( \left(\frac{M(\phi(y_0) - \phi_\ast) + M^2d_0^2 }{\hat{\rho}^2} \right)^{1/3} + \frac{M D_h}{\hat{\rho}} +  \frac{M\bar mD^2_h}{{\hat{\rho}}^2} + 1 \right).
	\]
	Clearly, this special case only requires $M$ as the AG method does and achieves the same
	iteration-complexity bound (in regards to the $\Theta(\hat \rho^{-2})$ dominant term).
	%
	
	\section{An adaptive variant of the NC-FISTA}\label{variants}
	
	This section describes the second ACG variant studied in this paper, namely ADAP-NC-FISTA, which, in contrast to NC-FISTA, does not require the knowledge of a curvature pair $(M,m)$
	as input.
	Instead of choosing the parameters $M$ and $ m $ as constants, it generates sequences $\{{\cal C}_k\}$ and $\{m_k\}$
	(see \eqref{req:order}, \eqref{req:lam} and \eqref{req:xi} below).
	
	We begin by describing ADAP-NC-FISTA.
	Note that it requires as input an initial arbitrary pair $(M_0, m_0)$ of positive scalars.

	\noindent\rule[0.5ex]{1\columnwidth}{1pt}
	
	ADAP-NC-FISTA
	
	\noindent\rule[0.5ex]{1\columnwidth}{1pt}
	\begin{itemize}
		\item[0.] Let an initial point $y_0 \in \dom h$, a scalar $ \theta>1 $, a pair $(M_0,m_0)\in\R^2_{++}$ such that $ M_0\ge m_0 $, and a tolerance $\hat \rho>0$ be given, and set $x_0=y_0$, $ A_0=2 $, $ \lam_0=1/M_0 $  and $k=0$;
		\item[1.]  compute	$a_k$ and  $A_{k+1}$ as in \eqref{ak}, $\tx_k$ as in \eqref{tildexk}, 
		\begin{equation}\label{def:ty}
			\ty_k = \frac{A_k y_k + a_k y_0}{A_{k+1}},
		\end{equation}
		and
		\begin{equation}\label{def:L}
			\underline m_{k+1} = \max\left\lbrace \frac{2 [\ell_f(\ty_k; \tx_k)-f(\ty_k) ]} {\|\ty_k-\tx_k\|^2}, 0\right\rbrace;
		\end{equation}
		\item[2.] 
		call the subroutine SUB$ (\theta,\lam_k,m_k) $ stated below to compute $(\lam_{k+1}, m_{k+1}) = (\lam,m)$ satisfying
		\begin{align}
			& \lam\le \lam_k, \quad m \ge m_k, \label{req:order}\\
			& \lam C_k(\lambda,m)\le 0.9, \label{req:lam}\\
			& 2m\left( \lam_k - \frac{\lam}{a_k}\right) \ge \underline m_{k+1}\lam, \label{req:xi}
		\end{align}
		where
		\begin{align}
			C_k(\lambda,m)&:=\frac{2[f(y_k(\lam,m))-\ell_f(y_k(\lambda,m);\tx_k)]}{\|y_k(\lambda,m)-\tx_k\|^2}, \label{def:Mk}\\
			y_k(\lambda,m)&:=\argmin_u \left\{ \ell_f(u;\tx_k)+h(u)+\frac12\left( \frac1{\lam}+\frac{2m}{a_k}\right)\|u-\tx_k\|^2 \right\}, \label{def:yk}
		\end{align}
		and go to step 3;
		\item[3.] compute 
		\begin{align}
			& y_{k+1}=y_k(\lam_{k+1},m_{k+1}), \quad  {\cal C}_{k+1}= C_k(\lam_{k+1},m_{k+1}),  \label{eq:y,M}\\
			& x_{k+1}= P_\Omega\left(\frac{(a_k+2m_{k+1}\lam_{k+1})y_{k+1}-(a_k-1)y_k}{2m_{k+1}\lam_{k+1} + 1}  \right), \nonumber \\
			& v_{k+1} =  \left( \frac{1}{\lam_{k+1}}+\frac{2m_{k+1}}{a_k}\right)  (\tilde{x}_k - y_{k+1}) + \nabla f(y_{k+1}) - \nabla f(\tilde{x}_k); \label{eq:v}
		\end{align}
		if $\|v_{k+1} \| \le \hat \rho$ then output  $(\hat y,\hat v)=(y_{k+1},v_{k+1})$ and {\bf stop}; otherwise, set $k \leftarrow k+1$ and go to step~1.
	\end{itemize}
	\rule[0.5ex]{1\columnwidth}{1pt}
	
	\vspace{5mm}
	We will  now describe the subroutine SUB$ (\theta,\lam,m) $ used in step 2 of ADAP-NC-FISTA to compute $(\lam, m)$ satisfying conditions  \eqref{req:order}-\eqref{req:xi}.
	\vspace{5mm}
	
	\noindent\rule[0.5ex]{1\columnwidth}{1pt}
	
	SUB$ (\theta,\lam,m) $
	
	\noindent\rule[0.5ex]{1\columnwidth}{1pt}
	
	\begin{itemize}
		\item[0.] Compute $ C_k(\lam,m) $ and $ y_k(\lam,m)$ according to \eqref{def:Mk} and \eqref{def:yk}, respectively;
		\item[1.] \textbf{if} $ (\lam,m) $ satisfy
		both \eqref{req:lam} and \eqref{req:xi}, then output $(\lam,m) $ and {\bf stop};
		\textbf{otherwise}, if \eqref{req:lam} is not satisfied then set
		\begin{align}
			\lambda^+ \leftarrow  \min \left\{ \frac{\lam}{\theta}, \frac{0.9}{ C_k(\lam,m)} \right \}; \label{eq:lamupdate}
		\end{align}
		if \eqref{req:xi} is not satisfied then set 
		\begin{align}
			m ^+\leftarrow 2 m; \label{eq:xiupdate}
		\end{align}
		\item[2.]  set $(\lam,m)=(\lam^+,m^+)$ and go to step 0.
	\end{itemize}
	\rule[0.5ex]{1\columnwidth}{1pt}
	
	We now make a few remarks about ADAP-NC-FISTA.
	First, ADAP-NC-FISTA consists of two types of iterations, namely, the ones indexed by $k$ that we refer to as outer iterations
	and the ones performed inside SUB$ (\theta,\lam,m) $ that we refer to as inner iterations. 
	Second, each inner iteration performs exactly
	one resolvent evaluation of $h$ to compute $y_k(\lam,m)$.
	Third, when the update \eqref{eq:lamupdate} is performed,
	the quantity $C_k(\lam,m)$ in the right hand side of \eqref{eq:lamupdate} is always positive due to
	the fact that \eqref{req:lam} is not satisfied  and,
	as a consequence, $\lam^+$ is well-defined and positive.
	Fourth, the choice of $A_0=2$, \eqref{ak} with $k=0$ and the fact that $\{a_k\}$ is increasing imply that $a_k \ge a_0=2$.
	Fifth, if $f$ is convex, and hence $\bar m = 0$, and $m_0$ is set to $0$ in ADAP-NC-FISTA, then it can be  easily seen that
	the adaptive search for $\lambda_k$ is equivalent to the adaptive search for the quantity $L_k$ in \cite{beck2009fast} via the correspondence $L_k=1/\lam_k$.
	Thus, ADAP-NC-FISTA reduces to FISTA with backtracking when $\bar m=0$.
	

	
	The following lemma states some properties of ADAP-NC-FISTA.
	
	\begin{lemma}\label{observation}
		The following statements hold for ADAP-NC-FISTA:
		\begin{itemize}
			\item[(a)]
			for every $k \geq 0$ and $\lam, m>0$, the quantities $C_k(\lam,m)$ and $ {\cal C}_{k+1}$
			defined in (\ref{def:Mk}) and (\ref{eq:y,M}), respectively, lie in $[-\bar m, \bar M]$;
			\item[(b)] for every $k \geq 0$, the quantity $\underline m_{k+1}$
			defined in (\ref{def:L}) lies in $[0, \bar m]$;
			
			\item[(c)] for every $k \ge 1$,
			\[
			{\cal C}_{k}\lambda_{k} \le 0.9, \quad 2m_{k}\lam_{k-1}\ge \underline m_{k}\lam_{k}+\frac{2m_{k}\lam_{k}}{a_{k-1}};
			\]
			\item[(d)]
			$\{\lambda_k\} $ is non-increasing and $\{m_k\} $ is non-decreasing;
			\item[(e)] for every $k \ge 0$,
			\begin{equation} \label{eq:barlam-barxi}
				\lam_k\ge \underline \lam :=\min\left\{ \frac{0.9}{\theta\bar M}, \lam_0 \right \}, \qquad
				m_k\le \max\{2\bar m, m_0\};
			\end{equation}
		\end{itemize}
	\end{lemma}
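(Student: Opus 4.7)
My plan is to address the five parts in order of increasing difficulty, treating (a)--(d) as bookkeeping consequences of the assumptions and loop conditions, and reserving the main work for (e).

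For (a) and (b), I would first package the smoothness (A3) together with the definition of $\bar M$ into the bound $|f(z') - \ell_f(z';z)| \le \tfrac{\bar M}{2}\|z'-z\|^2$, and the weak convexity (A4) together with the definition of $\bar m$ into $f(z') - \ell_f(z';z) \ge -\tfrac{\bar m}{2}\|z'-z\|^2$. Plugging $z' = y_k(\lam,m)$, $z = \tx_k$ into these two bounds and using \eqref{def:Mk} immediately gives $C_k(\lam, m) \in [-\bar m, \bar M]$; the analogous bound for ${\cal C}_{k+1}$ follows via \eqref{eq:y,M}. For (b), the lower bound $\underline m_{k+1} \ge 0$ is built into the $\max$ in \eqref{def:L}, while the upper bound $\underline m_{k+1} \le \bar m$ is the same weak-convexity estimate applied with $z' = \ty_k$, $z = \tx_k$.

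Parts (c) and (d) are then direct translations of the loop invariants of SUB. SUB returns $(\lam_{k}, m_{k})$ at outer iteration $k-1$ only after \eqref{req:lam} and \eqref{req:xi} hold (with $k$ replaced by $k-1$); reading these under the identification \eqref{eq:y,M} yields ${\cal C}_k \lam_k \le 0.9$ and, after rearranging, $2 m_k \lam_{k-1} \ge \underline m_k \lam_k + 2 m_k \lam_k / a_{k-1}$. Monotonicity in (d) is exactly the first pair of inequalities in \eqref{req:order}.

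The main obstacle is (e), which requires analyzing the adaptive search in SUB across both inner and outer iterations. For the lower bound on $\lam_k$, the key observation is that the $\lam$-update \eqref{eq:lamupdate} triggers only when $\lam C_k(\lam,m) > 0.9$, and by (a) this forces $\lam > 0.9/\bar M$. Consequently $\lam/\theta > 0.9/(\theta\bar M)$ and $0.9/C_k(\lam,m) \ge 0.9/\bar M > 0.9/(\theta\bar M)$, so the new value $\lam^+$ never drops below $0.9/(\theta\bar M)$. An inductive argument across all inner iterations inside all SUB calls, starting from $\lam_0$, then gives $\lam_k \ge \min\{\lam_0, 0.9/(\theta\bar M)\} = \underline \lam$. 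For the upper bound on $m_k$, I would show that whenever $m \ge \bar m$ the condition \eqref{req:xi} is automatically satisfied, so the $m$-update \eqref{eq:xiupdate} never fires from a value $\ge \bar m$. Using \eqref{req:order} ($\lam \le \lam_k$), $a_k \ge 2$ (from the fourth remark following ADAP-NC-FISTA), and (b) ($\underline m_{k+1} \le \bar m$), one estimates
\[
2m(\lam_k - \lam/a_k) \ge 2m \lam_k (1 - 1/a_k) \ge m \lam_k \ge \bar m \lam_k \ge \bar m \lam \ge \underline m_{k+1} \lam.
\]
Therefore each doubling occurs from a value $< \bar m$ and produces $m^+ < 2\bar m$, so combined with the initial value $m_0$ (which is never doubled once it is already $\ge \bar m$), one obtains $m_k \le \max\{m_0, 2\bar m\}$, completing (e).
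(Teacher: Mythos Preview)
Your proposal is correct and follows essentially the same approach as the paper: parts (a)--(d) are handled identically, and for (e) the key observations---that the $\lambda$-update fires only when $\lambda>0.9/\bar M$ and produces $\lambda^+>0.9/(\theta\bar M)$, and that \eqref{req:xi} is automatically satisfied once $m\ge\bar m$---are the same as in the paper. The only cosmetic difference is that you phrase both parts of (e) as direct inductive arguments, whereas the paper phrases them as proofs by contradiction; the underlying inequalities are identical.
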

	
	\begin{proof}
		(a)-(b) It follows from \eqref{ineq:Lips} (resp., \eqref{ineq:curv}) and
		the fact that $\bar M$ (resp., $ \bar m $) is the smallest scalar $ M $ (resp., $ m $) satisfying \eqref{ineq:Lips} (resp., \eqref{ineq:curv})
		that $ C_k(\lam,m) $ and $ {\cal C}_{k+1}$ (resp., $ \underline m_{k+1} $) is bounded above by $\bar M$ (resp., $ \bar m $). The quantities $ C_k(\lam,m) $ and $ {\cal C}_{k+1}$ are bounded below by $ -\bar m $ follows from $\bar m$ satisfying (\ref{ineq:curv}), and $ \underline m_{k+1}$ is non-negative due to \eqref{def:L}.
		
		(c) The two conclusions follow from requirements \eqref{req:lam} and \eqref{req:xi}. 
		
		(d) The requirements in \eqref{req:order} on $ (\lam,m) $ immediately imply the two conclusions.
		
		(e) We first prove the first inequality in (\ref{eq:barlam-barxi}). Indeed, assume for contradiction that it does not hold and
		let $ \hat k $ be the smallest $k \ge 0$ such that $ \lam_k<\underline \lam $.
		Since
		$\underline \lam  \le \lam_0$ in view of the definition of $\underline \lam$ in \eqref{eq:barlam-barxi},
		it follows from the definition of $\hat k$ that
		$\lam_{\hat k}$ is obtained from \eqref{eq:lamupdate}, i.e.,
		\begin{eqnarray}\label{ineq:lambdakhat2}
			\lam_{\hat k}=\lam^+:=\min\left\lbrace \frac{\lam}{\theta},\frac{0.9}{C_{\hat k-1}(\lam,m)}\right\rbrace 
		\end{eqnarray}
		for some $ (\lam, m) \in (0,\lam_0] \times \R_{++}$ such that \eqref{req:lam} does not hold for the pair $(\lambda, m)$ where $k = \hat{k} - 1$ in (\ref{req:lam}).   Hence $ C_{\hat k-1}(\lam,m) > 0$  in view of the third remark following SUB$ (\theta,\lam,m) $. 
		Moreover, it follows from the definition of $\underline \lam$ in \eqref{eq:barlam-barxi},  statement (a)
		and the facts that $\theta>1$ and $ C_{\hat k-1}(\lam,m) > 0$ that
		\begin{eqnarray}\label{ineq:lambdakhat}
			\lam_{\hat k} < \underline{\lambda} \leq \frac{0.9}{\theta\bar M} < \frac{0.9}{\bar M} \le \frac{0.9}{C_{\hat k-1}(\lam,m)}.
		\end{eqnarray}
		Clearly, \eqref{ineq:lambdakhat2}
		and \eqref{ineq:lambdakhat} imply that $\lam_{\hat k}= \lam/\theta$. On the other hand,
		the fact that $\lam$ does not satisfy \eqref{req:lam} and statement (a) imply that
		$ \lam > 0.9/C_{\hat k-1}(\lam,m) \ge 0.9/\bar M $
		and hence that
		$ \lam_{\hat k} = \lam/\theta > 0.9/\theta\bar M \ge \underline \lam $
		due to the definition of $\underline \lam$. Since the latter inequality contradicts
		our initial assumption, the first inequality in (\ref{eq:barlam-barxi}) follows.
		To prove the second inequality in (\ref{eq:barlam-barxi}), assume for contradiction that it does not hold and let ${\bar{k}} \geq 0$ be such that $m_{{\bar{k}}} > \max\{2\bar m, m_0\} $.  It follows that $m_{\bar{k}} >  m_0$ by the definition of $\hat{m}$ in (\ref{eq:barlam-barxi}), which, in view of \eqref{eq:xiupdate}, implies that ${\bar{k}} \geq 1$ and $m_{{\bar{k}}} = 2 m$ for some $ m\in \R_{++} $ that does not satisfied \eqref{req:xi}, i.e., $m$ satisfies
		\begin{eqnarray}\label{ineq:xihathatk}
			2m \lambda_{{{\bar{k}}}-1} < \underline m_{{\bar{k}}} \lambda + \frac{2m\lam}{a_{{{\bar{k}}}-1}}.
		\end{eqnarray}   
		It then follows from (\ref{ineq:xihathatk}), $\underline{m}_{\bar{k}} \leq \bar{m}$ due to  statement (b), $\lambda \leq \lambda_{\bar{k}-1}$, and $a_{{\bar{k}}-1} \geq a_0 = 2$ that $m <  \bar{m}$.  The latter inequality and the fact that $m_{{\bar{k}}} = 2 m$  imply that $m_{\bar{k}} < \max\{2\bar m, m_0\}$, which contradicts our initial assumption.  Hence the second inequality in (\ref{eq:barlam-barxi}) follows.  
	\end{proof}
	%

	
	We have the following technical results that lead to Proposition \ref{lem:xi}, which then allows us to establish the iteration-complexity result for ADAP-NC-FISTA in Theorem \ref{thm:ADAP-NC-FISTA}.
	
	\begin{lemma}\label{lem:technical}
		For every $k \geq 0$ and $u \in \R^n$, we define
		\begin{equation}\label{def:gamma}
			\gamma_k(u) := \tilde\gamma_k(y_{k+1}) +  \frac{1}{\lam_{k+1}}\langle \tilde{x}_k - y_{k+1}, u - y_{k+1} \rangle + \frac{m_{k+1}}{a_k} \| u - y_{k+1} \|^2
		\end{equation}
		and
		\begin{equation}\label{def:tgamma}
			\tilde \gamma_k(u) :=  \ell_f(u;\tilde{x}_k) + h(u) + \frac{m_{k+1}}{a_k} \| u - \tilde{x}_k \|^2.
		\end{equation}
		Then, for every $k \ge 0$, we have:
		\beq 
		\displaystyle A_k\gamma_k(y_k) + a_k\gamma_k(y_0) \le
		A_{k+1} \gamma_k\left( \ty_k \right) + m_{k+1}\|y_k - y_0\|^2, \label{ineq:technical1}
		\eeq
		\beq
		\displaystyle A_{k+1}\phi\left( \ty_k \right)  - A_{k} \phi(y_k) -a_k \phi(y_0) \le \frac{\bar m a_k}{2}   \left\| y_k-y_0 \right\|^2, \label{ineq:technical2}
		\eeq
		\beq 
		\displaystyle \gamma_k(\ty_k)-\phi(\ty_k) \le \frac{m_{k+1}\lam_k}{A_{k+1}\lam_{k+1}}\|y_0-x_k\|^2. \label{ineq:technical3}
		\eeq
	\end{lemma}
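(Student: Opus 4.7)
The three inequalities all revolve around convex-combination arguments at $\ty_k$. My overall plan is to use, as a common backbone, the three-point identity already invoked in Lemma~\ref{consequenceproposition}, namely
\[
A_k\|u_1-c\|^2+a_k\|u_2-c\|^2-A_{k+1}\Bigl\|\tfrac{A_k u_1+a_k u_2}{A_{k+1}}-c\Bigr\|^2=\tfrac{A_k a_k}{A_{k+1}}\|u_1-u_2\|^2,
\]
which is valid for any $c\in\R^n$. Together with (\ref{def:ty}) and the fact that $A_{k+1}=a_k^2$ from (\ref{Ak+1ak}) (which still holds in ADAP-NC-FISTA since (\ref{ak}) is reused in step 1), this identity will handle the quadratic manipulations, while convexity of $h$ and $\bar m$-weak convexity of $f$ handle everything else.

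For (\ref{ineq:technical1}): inspecting (\ref{def:gamma}) shows that $\gamma_k$ is a quadratic in $u$ whose constant and linear components are both affine functions of $u$. Since $A_k y_k+a_k y_0=A_{k+1}\ty_k$, the affine portions contribute zero to the difference $A_k\gamma_k(y_k)+a_k\gamma_k(y_0)-A_{k+1}\gamma_k(\ty_k)$. Applying the backbone identity with $c=y_{k+1}$ to the quadratic term $(m_{k+1}/a_k)\|\cdot-y_{k+1}\|^2$ yields exactly $A_k m_{k+1}/A_{k+1}\cdot\|y_k-y_0\|^2$, which is bounded by $m_{k+1}\|y_k-y_0\|^2$ because $A_k<A_{k+1}$.

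For (\ref{ineq:technical2}): split $\phi=f+h$. Convexity of $h$ applied to $\ty_k=(A_ky_k+a_ky_0)/A_{k+1}$ gives $A_{k+1}h(\ty_k)\le A_k h(y_k)+a_k h(y_0)$. For $f$, the $\bar m$-weak convexity inequality (\ref{ineq:curv}) is equivalent to convexity of $g:=f+\tfrac{\bar m}{2}\|\cdot\|^2$, so $A_{k+1}g(\ty_k)\le A_k g(y_k)+a_k g(y_0)$; rearranging and applying the backbone identity with $c=0$ produces
\[
A_{k+1}f(\ty_k)-A_kf(y_k)-a_kf(y_0)\le\tfrac{\bar m}{2}\cdot\tfrac{A_k a_k}{A_{k+1}}\|y_k-y_0\|^2\le\tfrac{\bar m a_k}{2}\|y_k-y_0\|^2.
\]
Summing the two pieces gives the claim.

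For (\ref{ineq:technical3}), which I expect to be the main technical hurdle: first establish $\gamma_k\le\tilde\gamma_k$ pointwise by repeating the optimality-plus-strong-convexity argument from Lemma~\ref{gammatildegamma}(a), using that $y_{k+1}=y_k(\lam_{k+1},m_{k+1})$ minimizes the $(1/\lam_{k+1}+2m_{k+1}/a_k)$-strongly convex objective in (\ref{def:yk}). This reduces the task to bounding $\tilde\gamma_k(\ty_k)-\phi(\ty_k)=[\ell_f(\ty_k;\tx_k)-f(\ty_k)]+(m_{k+1}/a_k)\|\ty_k-\tx_k\|^2$. Then use (\ref{def:L}) to bound the bracket by $(\underline m_{k+1}/2)\|\ty_k-\tx_k\|^2$, giving a combined coefficient $\underline m_{k+1}/2+m_{k+1}/a_k$. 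The crucial algebraic step is to observe that (\ref{req:xi}) can be rewritten as $\underline m_{k+1}/2\le m_{k+1}\lam_k/\lam_{k+1}-m_{k+1}/a_k$, so the combined coefficient collapses to $m_{k+1}\lam_k/\lam_{k+1}$. Finally, subtracting (\ref{tildexk}) from (\ref{def:ty}) yields $\ty_k-\tx_k=(a_k/A_{k+1})(y_0-x_k)$, and $A_{k+1}=a_k^2$ gives $\|\ty_k-\tx_k\|^2=\|y_0-x_k\|^2/A_{k+1}$, producing the stated bound. The delicate part is matching the coefficient inequality from (\ref{req:xi}) to precisely cancel the $m_{k+1}/a_k$ contribution coming from the quadratic term in $\tilde\gamma_k$.
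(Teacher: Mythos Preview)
Your proposal is correct and follows essentially the same approach as the paper: the three-point identity for the quadratic terms, convexity of $h$ plus $\bar m$-weak convexity of $f$ for \eqref{ineq:technical2}, and the chain $\gamma_k\le\tilde\gamma_k$ followed by \eqref{def:L} and \eqref{req:xi} for \eqref{ineq:technical3}. The only cosmetic differences are that the paper packages the backbone identity as a single statement about quadratic functions (applied directly to $\gamma_k$) and invokes Lemma~\ref{observation}(c) rather than \eqref{req:xi} directly, but the content is identical.
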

	\begin{proof}
		Note that for any quadratic function $ \gamma:\R^n \to \R $ with a quadratic term $ \alpha\|\cdot\|^2 $, every $ A, a \in \R_+ $ and $ x, y\in \R^n $, we have
		\[
		A\gamma(y)+a\gamma(x)=(A+a)\gamma\left(  \frac{Ay+ax}{A+a}\right) + \frac{Aa}{A+a}\alpha\|y-x\|^2.
		\]
		Applying the above identity with $ \gamma=\gamma_k $, $ A=A_k $, $ a=a_k $, $ y=y_k $ and $ x = y_0 $, and using  the definition of $\tilde{y}_k$ in \eqref{def:ty} and the relation \eqref{Ak+1ak},  we obtain
		\[
		A_k\gamma_k(y_k) + a_k\gamma_k(y_0) = A_{k+1} \gamma_k\left( \ty_k \right) + \frac{m_{k+1}A_k}{A_{k+1}}\|y_k - y_0\|^2\le A_{k+1} \gamma_k\left( \ty_k \right) + m_{k+1}\|y_k - y_0\|^2
		\]
		where the inequality follows from the fact that $ A_k\le A_{k+1} $.
		Inequality (\ref{ineq:technical1}) then follows.  We now show (\ref{ineq:technical2}). Due to the convexity of $h$, and relations (\ref{ak}) and (\ref{def:ty}), we have
		\[
		A_{k+1} h(\ty_k) - A_k h(y_k) - a_k h(y_0) \leq 0.
		\]
		It follows from $ \phi=f+h $, the above inequality, the fact that $A_k \leq A_{k+1}$, and relations
		\eqref{ineq:curv}, (\ref{ak}) and \eqref{def:ty} that
		\begin{align*}
			A_{k+1}\phi\left( \ty_k \right)  - A_{k} \phi(y_k) -a_k \phi(y_0)&\le A_{k+1}f\left(\ty_k \right)  - A_{k} f(y_k) -a_k f(y_0)  \\
			& \le \frac{\bar m A_k a_k}{2A_{k+1}}   \left\| y_k-y_0 \right\|^2  \le \frac{\bar m a_k}{2}   \left\| y_k-y_0 \right\|^2.
		\end{align*}
		Next, we show (\ref{ineq:technical3}). Using similar arguments as in the proof of Lemma \ref{gammatildegamma}(a), we have $ \gamma_k(u)\le \tilde \gamma_k(u) $ for every $ u\in \dom h $.  Hence, using \eqref{def:tgamma}, \eqref{def:L}, \eqref{def:ty}, \eqref{tildexk} and Lemma \ref{observation}(c) that for every $ k \ge 0 $, we have 
		\begin{align*}
			&\gamma_k(\ty_k)-\phi(\ty_k)\le \tilde \gamma_k(\ty_k)-\phi(\ty_k)
			=\ell_f(\ty_k;\tx_k)-f(\ty_k)+\frac{m_{k+1}}{a_k}\|\ty_k-\tx_k\|^2 \\
			&\le \frac12\left( \underline m_{k+1}+\frac{2m_{k+1}}{a_k}\right) \|\ty_k-\tx_k\|^2 = \frac{1}{2A_{k+1}}\left( \underline m_{k+1}+\frac{2m_{k+1}}{a_k}\right) \|y_0-x_k\|^2 
			\le \frac{m_{k+1}\lam_k}{A_{k+1}\lam_{k+1}}\|y_0-x_k\|^2.
		\end{align*}
		Inequality (\ref{ineq:technical3}) then follows.
	\end{proof}
	
	\begin{proposition}\label{lem:xi}
		For every $ k\ge 1 $, we have
		\begin{equation}\label{ineq:sum2}
			\frac{1}{20}\left( \sum_{i=0}^{k-1}\frac{A_{i+1}}{m_{i+1}}\right) \min_{0 \leq i \leq k-1} \| y_{i+1} - \tilde{x}_i \|^2 \le 
			\lam_0D_h^2 \left( k+\bar m\sum_{i=0}^{k-1}\frac{a_i}{2m_{i+1}}\right) + \frac{2\lam_0}{m_0}A_k(\phi(y_0)-\phi_*).
		\end{equation}
	\end{proposition}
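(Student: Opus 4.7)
My plan is to follow the template of the proof of Lemma~\ref{consequenceproposition} / Proposition~\ref{lem:convergence} for NC-FISTA, adapted to variable parameters $(\lam_{k+1},m_{k+1})$ and with the reference point taken to be specifically $u=y_0$, since that is the point on which Lemma~\ref{lem:technical} is centered. First, I would establish an analog of Lemma~\ref{invariantinequality} for ADAP-NC-FISTA at $u=y_0$: since ADAP-NC-FISTA has exactly the same structure as NC-FISTA after the substitutions $\kappa_0 m\mapsto 2m_{k+1}$ and $\lam\mapsto\lam_{k+1}$, the same sequence of steps used in the proof of Lemma~\ref{invariantinequality} yields
\begin{align*}
\lam_{k+1}A_{k+1}\phi(y_{k+1}) &+ \tfrac{2m_{k+1}\lam_{k+1}+1}{2}\|y_0-x_{k+1}\|^2 + \tfrac{(1-\lam_{k+1}{\cal C}_{k+1})A_{k+1}}{2}\|y_{k+1}-\tx_k\|^2 \\
&\le \lam_{k+1}\bigl[A_k\gamma_k(y_k)+a_k\gamma_k(y_0)\bigr] + \tfrac{1}{2}\|y_0-x_k\|^2.
\end{align*}
I would then chain \eqref{ineq:technical1}, \eqref{ineq:technical3}, and \eqref{ineq:technical2} in that order to upper-bound the bracketed $\gamma_k$-quantity by $A_k\phi(y_k)+a_k\phi(y_0)$ plus error terms proportional to $\|y_k-y_0\|^2$ (ultimately controlled by $D_h^2$) and to $\|y_0-x_k\|^2$ (carrying the awkward coefficient $m_{k+1}\lam_k$, not $m_{k+1}\lam_{k+1}$).

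Second, I would divide the resulting inequality by $m_{k+1}$ and subtract $\phi_*\lam_{k+1}A_{k+1}/m_{k+1}$ from both sides to convert function values into optimality gaps. This algebraic step is crucial: after division, the coefficient of $\|y_0-x_k\|^2$ on the right is $\lam_k+1/(2m_{k+1})$ versus $\lam_{k+1}+1/(2m_{k+1})$ for $\|y_0-x_{k+1}\|^2$ on the left, and the coefficient of $\phi(y_k)-\phi_*$ on the right is $\lam_{k+1}A_k/m_{k+1}$ versus $\lam_{k+1}A_{k+1}/m_{k+1}$ for $\phi(y_{k+1})-\phi_*$ on the left. Invoking Lemma~\ref{observation}(d) ($\lam_{k+1}\le\lam_k$, $m_{k+1}\ge m_k$), I would enlarge the right-hand coefficients to $\lam_k+1/(2m_k)$ and $\lam_kA_k/m_k$, producing a genuine telescoping recursion. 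Summing from $i=0$ to $k-1$ collapses the $\|y_0-x_j\|^2$ and $(\phi(y_j)-\phi_*)$ chains; $y_0=x_0$ kills the $\|y_0-x_0\|^2$ remainder; $\|y_i-y_0\|\le D_h$ converts each $\|y_i-y_0\|^2$-error into a $D_h^2$-error; and combining the initial $\lam_0A_0(\phi(y_0)-\phi_*)/m_0$ with $\sum_{i=0}^{k-1}\lam_{i+1}a_i(\phi(y_0)-\phi_*)/m_{i+1}\le(\lam_0/m_0)(A_k-A_0)(\phi(y_0)-\phi_*)$ (using $A_0=2$ and monotonicity once more) produces the $\tfrac{2\lam_0}{m_0}A_k(\phi(y_0)-\phi_*)$ term of \eqref{ineq:sum2}. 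Finally, Lemma~\ref{observation}(c) gives $1-\lam_{i+1}{\cal C}_{i+1}\ge 0.1$, so the $\|y_{i+1}-\tx_i\|^2$-sum on the left is at least $\tfrac{1}{20}\sum_{i=0}^{k-1}(A_{i+1}/m_{i+1})\cdot\min_i\|y_{i+1}-\tx_i\|^2$, and \eqref{ineq:sum2} follows.

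The main obstacle is the mismatch between the coefficient $m_{k+1}\lam_k$ of $\|y_0-x_k\|^2$ injected on the right by \eqref{ineq:technical3} and the coefficient $m_{k+1}\lam_{k+1}$ of $\|y_0-x_{k+1}\|^2$ on the left: since $\lam_{k+1}\le\lam_k$, these do not telescope directly. Dividing by $m_{k+1}$ before invoking monotonicity of $\{\lam_k\}$ and $\{m_k\}$ is what reconciles the two and enables the telescoping; the same division also allows the $\phi(y_k)-\phi_*$ chain to close.
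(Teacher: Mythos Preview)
Your proposal is correct and follows essentially the same route as the paper: establish the ADAP-NC-FISTA analogue of Lemma~\ref{invariantinequality} at $u=y_0$, chain the three inequalities of Lemma~\ref{lem:technical}, divide by $m_{i+1}$, and telescope using the monotonicity of $\{\lam_i\}$ and $\{m_i\}$ from Lemma~\ref{observation}(d).

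The only noteworthy difference is the choice of reference level for the function values. The paper subtracts $\phi(y_0)$, so the gaps $\phi(y_i)-\phi(y_0)$ can have either sign; this prevents directly enlarging the coefficient $\lam_{i+1}/m_{i+1}$ to $\lam_i/m_i$ and forces the paper to carry the extra term $\sum_{i}\bigl(\tfrac{\lam_i}{m_i}-\tfrac{\lam_{i+1}}{m_{i+1}}\bigr)A_i(\phi(y_0)-\phi(y_i))$, which is then bounded via an Abel-summation argument by $(\lam_0/m_0)A_k(\phi(y_0)-\phi_*)$. You instead subtract $\phi_*$, so that $\phi(y_i)-\phi_*\ge 0$ and the enlargement $\lam_{i+1}/m_{i+1}\le\lam_i/m_i$ can be applied directly; the residual $\sum_{i}\tfrac{\lam_{i+1}}{m_{i+1}}a_i(\phi(y_0)-\phi_*)$ is bounded immediately by $(\lam_0/m_0)(A_k-A_0)(\phi(y_0)-\phi_*)$. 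This is cleaner and in fact yields the sharper constant $\lam_0/m_0$ rather than the $2\lam_0/m_0$ stated in \eqref{ineq:sum2} (the mention of $A_0=2$ is not actually needed). Either way, the proposition follows.
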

	\begin{proof}
		Using similar arguments as in the proof of Lemma \ref{invariantinequality} and the definition of $ {\cal C}_i $ in \eqref{eq:y,M}, we conclude that  for every $i \geq 0$ and $u \in \Omega$,
		\begin{align}
			& 2\lam_{i+1} A_{i+1} \phi(y_{i+1}) + \left(  2m_{i+1}\lam_{i+1} + 1 \right) \| u - x_{i+1} \|^2 + (1-\lam_{i+1}  {\cal C}_{i+1}) A_{i+1} \| y_{i+1} - \tilde{x}_i \|^2 \nn  \\
			& \leq  2\lam_{i+1} A_i \gamma_i(y_i) + 2\lam_{i+1}  a_i \gamma_i(u) + \|u - x_i \|^2, \label{ineq:recursive}
		\end{align}
		where $\gamma_i$ and $\tilde \gamma_i$ are defined by \eqref{def:gamma} and \eqref{def:tgamma}, respectively.
		Using the relation \eqref{ineq:recursive} with $u=x_0$, Lemmas \ref{lem:technical}, \ref{observation}(c)-(d),
		the facts that $ x_0=y_0 $ and $ \lam_i\le \lam_0 $ for $ i\ge 0 $, and the definition of $ D_h $ in \eqref{eq:diam} we conclude that for every $0 \le i \le k-1$,
		\begin{align*}
			\frac{1}{10} A_{i+1} &\| y_{i+1} - \tilde{x}_i \|^2  + 
			\left[ 2\lam_{i+1}A_{i+1} (\phi(y_{i+1}) - \phi(y_0)) + (2m_{i+1}\lam_{i+1} +1)\|x_0 - x_{i+1} \|^2 \right] \nonumber \\
			& \ \ -  \left[ 2\lam_{i+1} A_{i}(\phi(y_{i}) - \phi( y_0)) + \|  x_0 - x_{i} \|^2 \right] \nonumber  \\
			&   \le  2\lam_{i+1} A_i(\gamma_i(y_i) - \phi(y_i)) + 2\lam_{i+1} a_i(\gamma_i(y_0) - \phi(y_0) ) \nonumber \\
			& = 2\lam_{i+1} [ A_i \gamma_i(y_i) + a_i \gamma_i(y_0) - A_{i+1}\phi(\ty_i) ] + 2\lam_{i+1} [A_{i+1}\phi(\ty_i) - A_i \phi(y_i) - a_i \phi(y_0) ] \nn \\
			&  \le   2\lam_{i+1}\left[ A_{i+1}( \gamma_i\left(\ty_i \right)  -\phi\left(\ty_i \right)) + m_{i+1}\|y_i - y_0\|^2 \right] 
			+ \bar m a_i\lam_{i+1}   \left\| y_i-y_0 \right\|^2   \nonumber \\
			&  \le   2m_{i+1}\lam_i \left\| y_0 - x_i\right\|^2 +2\lam_{i+1} m_{i+1}\|y_i - y_0\|^2
			+  \bar m a_i\lam_{i+1}   \left\| y_i-y_0 \right\|^2  \nonumber \\
			& \le  2m_{i+1}\lam_i \|x_0-x_i\|^2  +   (2m_{i+1}+\bar m  a_i ) \lam_0 D_h^2 \label{ineq:key1} 
		\end{align*}
		where the second inequality follows from \eqref{ineq:technical1} and \eqref{ineq:technical2}, the third inequality follows from \eqref{ineq:technical3}.
		Dividing the above inequality by $ 2m_{i+1} $, rearranging terms and using the fact that, by Lemma \ref{observation}(d), $ m_i\le m_{i+1}$,
		we obtain
		\begin{align*}
			\frac{A_{i+1}}{20m_{i+1}} \| y_{i+1} - \tilde{x}_i \|^2 \le & 
			\left[ \frac{\lam_{i}}{m_{i}} A_{i}(\phi(y_{i}) - \phi( y_0)) + \left ( \frac{1}{2m_{i}}+\lam_i \right)  \|  x_0 - x_{i} \|^2 \right] \nonumber \\
			& -\left[ \frac{\lam_{i+1}}{m_{i+1}}A_{i+1} (\phi(y_{i+1}) - \phi(y_0)) + \left( \frac{1}{2m_{i+1}}+\lam_{i+1} \right)  \|x_0 - x_{i+1} \|^2 \right] \nonumber \\
			&+\left( \frac{\lam_{i}}{m_{i}}-\frac{\lam_{i+1}}{m_{i+1}}\right) A_i(\phi(y_0)-\phi(y_i))+\left( 1+ \frac{\bar m a_i}{2m_{i+1}}\right) \lam_0D_h^2. 
		\end{align*}
		Summing the above inequality from $i=0$ to $i=k-1$ and using the facts $\phi(y_i) \ge \phi_*$ for $ i\ge 0 $ and
		$\{\lam_i/m_i\}$ is non-increasing due to Lemma \ref{observation}(d), we obtain
		\begin{align*}
			\frac{1}{20} &\left( \sum_{i=0}^{k-1}\frac{A_{i+1}}{m_{i+1}}\right)  \min_{0 \leq i \leq k-1} \| y_{i+1} - \tilde{x}_i \|^2 \le 
			\frac{\lam_k}{m_k}A_k(\phi(y_0)-\phi(y_k)) -  \left( \frac{1}{2m_{k}} + \lam_k \right) \|x_0 - x_{k} \|^2
			\nonumber \\
			&\ \ \ + \sum_{i=0}^{k-1} \left( \frac{\lam_i}{m_i}-\frac{\lam_{i+1}}{m_{i+1}}\right)  A_i (\phi(y_0)-\phi(y_i)) 
			+ \lam_0D_h^2 \left( k+\bar m\sum_{i=0}^{k-1}\frac{a_i}{2m_{i+1}}\right)  \nonumber  \\
			&\le  \frac{\lam_0}{m_0}A_k(\phi(y_0)-\phi_*) + (\phi(y_0)-\phi_*)  \sum_{i=0}^{k-1} \left( \frac{\lam_i}{m_i}-\frac{\lam_{i+1}}{m_{i+1}}\right)  A_i 
			+ \lam_0D_h^2 \left( k+\bar m\sum_{i=0}^{k-1}\frac{a_i}{2m_{i+1}}\right). 
		\end{align*}
		Now, using the fact that $ \{A_k\} $ is increasing and $\{\lam_k/m_k\}$ is non-increasing, we have
		\[
		\sum_{i=0}^{k-1} \left( \frac{\lam_i}{m_i}-\frac{\lam_{i+1}}{m_{i+1}}\right)  A_i  \le \frac{\lam_0}{m_0}A_0 + \sum_{i=1}^{k-1} (A_i-A_{i-1}) \frac{\lam_i}{m_i} \le \frac{\lam_0}{m_0}A_0 + \sum_{i=1}^{k-1} (A_i-A_{i-1}) \frac{\lam_0}{m_0}  \le \frac{\lam_0}{m_0}A_k.
		\]
		Combining the above two inequalities, we then conclude that \eqref{ineq:sum2} holds.
	\end{proof}
	
	The next theorem is the main result of this section presenting the iteration-complexity for finding a $ \hat \rho $-approximate solution of \eqref{eq:probintro} by ADAP-NC-FISTA.

	\begin{theorem}\label{thm:ADAP-NC-FISTA} 
		The following statements hold:
		\begin{itemize}
			\item[(a)]
			every iterate $(y_k,v_k)$ generated by ADAP-NC-FISTA satisfies
			\begin{eqnarray*}
				v_{k} \in \nabla f(y_{k}) + \partial h(y_{k});
			\end{eqnarray*}
			moreover, ADAP-NC-FISTA outputs a  $\hat \rho$-approximate solution $(\hat y,\hat v)$ in a number of
			outer iterations ${\cal T}$  bounded by
			\begin{equation}\label{cmplx}
				{\cal T} = \mathcal{O}\left( 
				\left(\frac{ C_1 \bar M [ \phi(y_0)-\phi_*] }{\hat{\rho}^2} \right)^{1/3} +
				\left( \frac{C_1 \bar M m_0 D_h^2}{\hat \rho^2} \right)^{1/2} +
				\frac{C_1 \bar M \left[ \bar mD_h^2+\phi(y_0)-\phi_*\right] }{\hat{\rho}^2}+1 \right)
			\end{equation}
			where $D_h$ is defined in \eqref{eq:diam}, $\bar m$ and $\bar M$ are defined in the paragraph following assumptions (A1)-(A4), and
			\begin{equation}\label{def:C1}
				C_1:= C_2
				\max\left\{ \frac{\bar m}{m_0},1 \right\}, \quad C_2:= \left[ \sqrt{\frac{M_0}{\bar M}} +\sqrt{\frac{\bar M}{M_0}} \, \right]^2;
			\end{equation}
			\item[(b)]
			if $ m_0\ge \bar m $,  then an alternative bound on ${\cal T}$ is
			\begin{equation}\label{cmplx2}
				{\cal T} = \mathcal{O}\left( \left(\frac{ C_2 \bar M \left[ \phi(y_0)-\phi_* + M_0 d_0^2\right]   }{\hat{\rho}^2} \right)^{1/3} +\left( \frac{C_2 \bar M m_0 D_h^2}{\hat \rho^2}\right)^{1/2} + \frac{C_2 \bar M \bar m D_h^2 }{\hat{\rho}^2}+1 \right),
			\end{equation}
			where $ C_2 $ is defined in \eqref{def:C1} and $ d_0 $ is defined in \eqref{eq:d0};
			\item[(c)] the total number of inner iterations, and hence resolvent evaluations of $h$, performed by ADAP-NC-FISTA is bounded by
			\begin{align}
				{\cal T} + \mathcal{O}\left( 
				\log_1^+ \left( \max \left\{ \frac{\bar M}{M_0} , \frac{\bar m}{m_0} \right \}  \right) \right) \label{resolvent}
			\end{align}
			where $ \log^+_1(\cdot) $ is defined in Subsection \ref{sec:DefNot}.
		\end{itemize}
	\end{theorem}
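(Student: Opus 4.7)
Part (a). The subgradient inclusion $v_k \in \nabla f(y_k) + \partial h(y_k)$ follows directly from writing the first-order optimality condition of \eqref{def:yk} at $y_k = y_k(\lambda_k, m_k)$ and substituting into \eqref{eq:v}. For the complexity bound, I would combine \eqref{eq:v} with the $\bar M$-Lipschitz continuity of $\nabla f$ to obtain
\[
\|v_{k+1}\| \le \left(\frac{1}{\lambda_{k+1}} + \frac{2m_{k+1}}{a_k} + \bar M\right)\|y_{k+1} - \tilde x_k\|,
\]
and use Lemma \ref{observation}(d)-(e) to bound the prefactor by $\mathcal{O}(\max\{\bar M, M_0\})$. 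Then invoke Proposition \ref{lem:xi} together with the standard growth estimates $A_k = \Theta(k^2)$, $\sum_{i=0}^{k-1} A_{i+1} = \Theta(k^3)$ and the upper bound $m_{i+1} \le \max\{2\bar m, m_0\}$ to bound the denominator $\sum A_{i+1}/m_{i+1}$ from below. Dividing, substituting, and imposing the stopping criterion $\|v_{k+1}\|^2 \le \hat \rho^2$ produces the three rate terms of \eqref{cmplx}, where the factor $C_1 = C_2 \max\{\bar m/m_0, 1\}$ absorbs the algebraic identities $(\max\{\bar M, M_0\})^2/M_0 \sim C_2 \bar M$ and $\max\{2\bar m, m_0\}/m_0 \sim \max\{\bar m/m_0, 1\}$.

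Part (b). Under $m_0 \ge \bar m$, Lemma \ref{observation}(e) gives $m_k \in [m_0, 2m_0]$, so $C_1$ collapses to $C_2$. More importantly, $m_k \ge \bar m$ is precisely the condition underpinning the NC-FISTA-type analysis of Lemma \ref{consequenceproposition}: rather than testing the ADAP recursive inequality \eqref{ineq:recursive} at $u = x_0 = y_0$ (which forces the crude $D_h^2$ bound via the auxiliary point $\tilde y_k$ and Lemma \ref{lem:technical}), I would test at $u = z^* \in Z^*$ with $\|z^* - x_0\| = d_0$, telescope, and discard the boundary term $-2\lambda_k A_k(\phi(y_k) - \phi_*) \le 0$. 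Controlling the residual $\sum_i (\lambda_i/m_i - \lambda_{i+1}/m_{i+1})A_i$ as in the proof of Proposition \ref{lem:xi} gives a bound whose $(\phi(y_0) - \phi_*)$ and $M_0 d_0^2$ contributions are $\mathcal{O}(1)$ in $k$; dividing by $\sum A_{i+1}/m_{i+1} = \Theta(k^3/m_0)$ then yields the $1/k^3$ decay feeding the $(\cdot)^{1/3}$ term of \eqref{cmplx2}.

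Part (c) and main obstacle. Each outer iteration finishes with exactly one successful inner iteration of SUB, giving ${\cal T}$ in total. Each failed inner iteration executes at least one of the updates \eqref{eq:lamupdate} or \eqref{eq:xiupdate}, which respectively shrink $\lambda$ by at least a factor of $\theta$ (since $\lambda^+ \le \lambda/\theta$) and double $m$. By Lemma \ref{observation}(e), $\lambda_k \ge \underline \lambda$ and $m_k \le \max\{2\bar m, m_0\}$ throughout the run, so the cumulative number of $\lambda$-reductions is at most $\log_\theta(\lambda_0/\underline\lambda) = \mathcal{O}(\log_1^+(\bar M/M_0))$ and the cumulative number of $m$-doublings is at most $\log_2(\max\{2\bar m, m_0\}/m_0) = \mathcal{O}(\log_1^+(\bar m/m_0))$; adding these bounds on the failed inner iterations to the ${\cal T}$ successful ones yields \eqref{resolvent}. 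The main obstacle I anticipate is part (b): transplanting the NC-FISTA-style argument into the ADAP framework requires carefully tracking the time-varying pair $(\lambda_k, m_k)$ through the telescoping and verifying that the residual $\sum_i(\lambda_i/m_i - \lambda_{i+1}/m_{i+1}) A_i$ remains of order $\lambda_0 A_k/m_0$ and does not inflate the $d_0^2$ contribution when anchoring at $z^*$ rather than $y_0$.
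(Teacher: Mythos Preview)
Parts (a) and (c) are essentially the paper's argument. For (a) the paper bounds $\|v_{k+1}\|$ exactly as you describe, inserts the parameter bounds from Lemma~\ref{observation}(e), invokes Proposition~\ref{lem:xi}, and finishes with the growth estimates of Lemma~A.1 in \cite{jliang2018double}. For (c) the paper likewise counts the total number of $(\lambda,m)$-updates across all calls to SUB and bounds it by the logarithmic term via \eqref{eq:lamupdate}--\eqref{eq:xiupdate} and Lemma~\ref{observation}(e).

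Part (b) is where your proposal has a gap. The paper handles (b) through a separate result (Proposition~\ref{lem:xi2}) whose first and decisive observation you miss: when $m_0\ge\bar m$, the inequality $(\bar m+2m_0/a_i)\lambda_{i+1}\le 2m_0\lambda_i$ holds for every $i$, so \eqref{req:xi} is \emph{automatically} satisfied at $m=m_0$ and consequently $m_k=m_0$ for all $k$, not merely $m_k\in[m_0,2m_0]$. With $m_k$ constant there is no need to divide by $m_{i+1}$; the recursion \eqref{ineq:recursive} at $u=z^*$ telescopes directly, and the only cross term $2(\lambda_{i+1}-\lambda_i)A_i(\phi(y_i)-\phi_*)\le 0$ is simply dropped. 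Your plan instead keeps $m_k$ variable, divides by $m_{i+1}$, and proposes to ``control the residual $\sum_i(\lambda_i/m_i-\lambda_{i+1}/m_{i+1})A_i$ as in the proof of Proposition~\ref{lem:xi}''. But that control yields a bound of order $\lambda_0 A_k/m_0$, which multiplied by $(\phi(y_0)-\phi_*)$ is $\Theta(k^2)$, not the $\mathcal{O}(1)$ you assert; carrying this through would add an unwanted $C_2\bar M(\phi(y_0)-\phi_*)/\hat\rho^2$ term to \eqref{cmplx2}. In fact the residual, when anchored at $z^*$, has the form $(\lambda_{i+1}/m_{i+1}-\lambda_i/m_i)A_i(\phi(y_i)-\phi_*)\le 0$ and can be discarded outright---but you do not make this observation, and indeed you flag exactly this point as your main anticipated obstacle. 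So your route for (b) is the right idea but lacks the step that makes it close.
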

	
	\begin{proof}
		(a) The first conclusion follows from the same argument as in the proof of Proposition \ref{lem:convergence}. Using the facts that $ a_k\ge a_0 = 2$ from the fourth remark after SUB$ (\theta,\lam,m) $ and Lemma \ref{observation}(e), we have
		\[
		\frac{1}{\lam_{k+1}}+\frac{2m_{k+1}}{a_k}
		\le \frac{1}{\underline \lam}+\max\{2\bar m, m_0\}
		\]
		for every $k \ge 0$. This conclusion together with the definition of $ \bar M $ in the paragraph following assumptions (A1)-(A4), assumption (A3) and  \eqref{eq:v} then implies that
		\begin{align} \nonumber
			\min_{1 \leq i \leq k}\|v_i\| & \le\min_{0 \leq i \leq k-1} \left( \frac{1}{\lam_{i+1}}+\frac{2m_{i+1}}{a_i}+\bar M\right) \|y_{i+1}-\tx_i\| \\
			&\le \left( \frac{1}{\underline \lam}+\max\{2\bar m, m_0\}+\bar M\right) \min_{0 \leq i \leq k-1}\|y_{i+1}-\tx_i\|. \label{eq:obss}
		\end{align}
		Moreover, using the definition of $ \underline \lam $ in \eqref{eq:barlam-barxi}, the facts that $ \bar m \le \bar M $ and $ \lam_0=1/M_0 $, and the definition of $ C_1 $ in \eqref{def:C1}, we have
		\[
		\frac{1}{\underline \lam}+\max\{2\bar m, m_0\}+\bar M\le \left( \frac{\theta}{0.9}+3\right) \left( M_0+\bar M\right)  \le (2\theta+5)\sqrt{\frac{C_1\bar M M_0 m_0}{\max\{2\bar m, m_0\}}}.
		\]
		Using Proposition \ref{lem:xi}, Lemma \ref{observation} (d)-(e), the above two inequalities, the fact that $ A_k=A_0 + \sum_{i=0}^{k-1}a_i $ due to \eqref{ak}, and rearranging terms, we obtain
		\begin{align*}
			& \frac{1}{20}\left(\sum_{i=0}^{k-1} A_{i+1}\right) \min_{1 \leq i \leq k} \| v_i \|^2 \\
			& \le (2\theta+5)^2C_1 \bar M \left[ 2A_0(\phi(y_0)-\phi_*)+m_0 D_h^2k +\left[ \frac{\bar mD_h^2}{2}+2(\phi(y_0)-\phi_*)\right]  \sum_{i=0}^{k-1}a_i\right] . 
		\end{align*}
		The complexity bound  \eqref{cmplx} now follows immediately from the above inequality and Lemma A.1 in \cite{jliang2018double}.
		
		(b)
		The proof of this statement is similar to  the proof of  (a) except that  Proposition \ref{lem:xi2} is used in place of Proposition \ref{lem:xi}.
		
		(c)
		It suffices to argue that the total number of times that the pair $(\lam,m)$ is updated inside
		all calls to the subroutine SUB$ (\theta,\lam,m) $ is bounded by the second term in \eqref{resolvent}.
		Indeed, this assertion follows from the following facts: 
		the initial value of $(\lam,m)$ is $(\lam_0,m_0)$ (see step 0 of ADAP-NC-FISTA);
		in view of \eqref{req:lam} and \eqref{req:xi}, the pair $ (\lam,m) $ is no longer updated whenever $ \lam \le 0.9/\bar M$ and $m \geq 2\bar{m}$, and;
		due to \eqref{eq:lamupdate} and \eqref{eq:xiupdate}, $\lambda$ is reduced by a factor less than or equal to $\theta>1$ and $m$ is increased by a factor of 2 each time either one of them is updated.
		%
	\end{proof}

	We now make two remarks about  ADAP-NC-FISTA in light of NC-FISTA.
	First, in contrast to NC-FISTA, the input pair $ (M_0,m_0) $ of ADAP-NC-FISTA can
	be an arbitrary pair  in $\R^2_{++}$.
	Second, if $ (M,m) $ denotes a pair as in step 0 of NC-FISTA, then it can be easily seen that
	$ (M_0,m_0) = (M,m)$ satisfies the assumption of Theorem \ref{thm:ADAP-NC-FISTA}(b) and
	the complexity bound \eqref{cmplx2} for ADAP-NC-FISTA with input pair $ (M_0,m_0) = (M,m)$ reduces to
	the complexity bound \eqref{eq:complexity} for NC-FISTA.

	We end this section by making a few final remarks about the iteration-complexity bound derived in Theorem \ref{thm:ADAP-NC-FISTA}(b)
	for the case in which $ M_0 = {\cal O}(\bar M)$. First, in this case,
	the dominant term of the complexity bound \eqref{cmplx2} is 
	$ {\cal O}\left( \bar M^2\bar m D_h^2/(M_0\hat \rho^2)\right)$, and hence
	it increases as $ M_0 $ decreases.
	Second, the best choice of $ M_0 $ that minimizes the constant $C_2$ in \eqref{def:C1} is $ M_0=\Theta(\bar M) $.
	However, computational experiments indicate that taking smaller values for $M_0$ improves the performance of the method.
	One reason that may explain this phenomenon is that the constant $\bar M$ that appears in \eqref{eq:obss},
	and as a consequence in  $C_1$, $C_2$, and the other terms that appear in the bounds \eqref{cmplx} and \eqref{cmplx2},
	is very conservative and close examination of the proof of Theorem \ref{thm:ADAP-NC-FISTA} shows that it
	can actually be replaced by the sharper (and potentially smaller) quantity
	\[
	L_k := \frac {\|\nabla f(y_{\hat k})-\nabla f(\tx_{\hat k-1})\| }
	{ \|y_{\hat k}-\tx_{\hat k-1}\| }, 
	\]
	where $\hat k = \argmin_i \{ \|y_{i}-\tx_{i-1}\| : 1 \le  i \le k\}$.

	\section{Computational results} \label{sec:computResults}
	This section reports experimental results obtained by our implementation of NC-FISTA, ADAP-NC-FISTA, and three variants
	of the latter method, on four problems that are instances of the SNCO problem \eqref{eq:probintro}, namely: nonconvex quadratic programming problem in both vector (Subsection \ref{subsec:vector}) and matrix versions (Subsection \ref{subsec:matrix}), matrix completion (Subsection \ref{subsec:MC}) and nonnegative matrix factorization (NMF, Subsection \ref{subsec:NMF}). Note that NMF is a problem for which $ \dom h $ is unbounded.
	
	We start by describing the three variants of ADAP-NC-FISTA
	considered in our computational benchmark, namely, R-ADAP-NC-FISTA, ADAP-NC-FISTA-BB and R-ADAP-NC-FISTA-BB.
		The first one is a restart variant of ADAP-NC-FISTA, namely,
		it restarts the latter method with input $y_0=y_k$ and $(M_0,m_0)=(M_0,m_k) $ whenever $\phi(y_{k+1}) \ge \phi(y_k)$ (hence, without resetting $k$ to $0$,
		this is equivalent to rejecting $y_{k+1}$ and 
		setting $ x_k=y_k $, $ A_k=A_0 $ and  $\lam_k=\lam_0$).
		The last two variants are heuristic variants of ADAP-NC-FISTA and R-ADAP-NC-FISTA,
		respectively, which invokes in step 2 the subroutine SUB with input
		$ (\theta,\tilde \lam_k,m_k) $ where 
		\[
	    \tilde \lam_k = \left\{ \begin{array}{cc}  
	    \lam_k^{BB}:=\frac{\inner{s_{k-1}}{g_{k-1}}}{\|g_{k-1}\|^2}, & \mbox{if $\lam_k^{BB}>0$};  \\ [.1in]
        \frac{1}{M_0}, & \mbox{otherwise} 
        \end{array} \right.
	    \]
		where $ s_{k-1}=\tx_{k-1}-y_k $ and $ g_{k-1}= \nabla f(\tx_{k-1})- \nabla f(y_k) $.
		
	
	For the sake of simplicity, we use the abbreviations NC, AD, AD(B), RA and RA(B) to refer to NC-FISTA, ADAP-NC-FISTA, ADAP-NC-FISTA-BB, R-ADAP-NC-FISTA and R-ADAP-NC-FISTA-BB, respectively, both in the discussions	and tables below.
	The triples $ (M, m, A_0) $ and $(M_0, m_0, \theta)$ which are used as input for NC and AD, respectively, depend on the problem under consideration and are described
	in the four subsections below. Moreover,
	AD(B), RA and RA(B) use the same input triple as AD.
	
	
	We compare our methods with four others: the AG method proposed in \cite{nonconv_lan16}, the NM-APG method proposed in \cite{Li_Lin2015}, and the UPFAG and UPFAG-BB methods proposed in \cite{LanUniformly}.
	Note that
	all four methods are natural extensions of ACG variants for solving convex programs to the context of nonconvex optimization problems.
	For the sake of simplicity, we use the abbreviations NM, UP and UP(B) to refer to NM-APG, UPFAG and UPFAG-BB, respectively, both in the discussions	and tables below.

	We now provide the details of our implementation of the four methods mentioned in the previous paragraph. AG was implemented as described in Algorithm 1 of \cite{nonconv_lan16} with sequences $ \{\alpha_k\} $, $ \{\beta_k\} $ and $ \{\lam_k \} $ chosen as $ (\alpha_k,\beta_k, \lam_k)=(2/(k+1),0.99/M,k\beta_k/2) $ for $ k\ge 1 $.
	NM was implemented as described in Algorithm 2 of \cite{Li_Lin2015} with the quadruple $ (\alpha_x,\alpha_y,\eta,\delta)$ chosen to be $(0.99/M,0.99/M,0.9,1) $.
	The code for UP was made available by the authors of \cite{LanUniformly} where
		UP is described (see Algorithm 1 of \cite{LanUniformly}). In particular, we have used their choice of parameters
		but have modified the code slightly to accommodate for the termination criterion \eqref{rho-sol} used in our benchmark.
		More specifically, the parameters $ (\hat \lam_0, \hat \beta_0, \gamma_1,\gamma_2,\gamma_3,\delta)$  needed as input by UP
		were set to
		$ (1/\bar M,1/\bar M,0.4,0.4,1,10^{-3}) $.
		UP(B) also requires the same parameters as UP and an additional one denoted
		by $\sigma$ in \cite{LanUniformly} which were set to the same values used in UP and to
		 $ \sigma=10^{-10} $, respectively.

	It is worth making
	the following remarks about the above method:
	i) AG and NM require two resolvent evaluations of $ h$ per iteration while  NC requires only one (see the third remark after NC);
	ii) NM reduces to the composite gradient method when a certain descent property is not satisfied;
	iii) AD, AD(B), RA, RA(B), UP and UP(B)
	can work without the knowledge of a curvature pair $ (M,m) $;
	and iv) UP and UP(B) adaptively compute both accelerated steps and unaccelerated ones using line searches.


	We implement all methods in MATLAB 2017b scripts and run them on a MacBook Pro with a 4-core Intel Core i7 processor and 16 GB of memory.
	

	\subsection{Nonconvex quadratic programming problem}\label{subsec:vector}
	This subsection discusses the performance of NC and its adaptive variants to solve the same quadratic programming problem as in \cite{KongMeloMonteiro,jliang2018double}, namely:
	\raggedbottom
	\begin{equation}\label{testQPprob}
		\min\left\{ f(z):=-\frac{\alpha_1}{2}\|DBz\|^{2}+\frac{\alpha_2}{2}\|Az-b\|^{2}:z\in\Delta_{n}\right\},
	\end{equation}
	where $(\alpha_1,\alpha_2)\in \R^2_{++}$, $D\in \R^{n\times n}$ is a diagonal matrix with diagonal
	entries sampled from the discrete uniform distribution ${\cal U}\{1,1000\}$,
	matrices $A\in \R^{l\times n}$,  $B\in \R^{n\times n}$ and vector $b\in \R^{ l}$ are such that their entries are generated
	from the uniform distribution ${\cal U}[0,1]$, and
	$\Delta_{n}:=\left\{ z\in\R^n:\sum_{i=1}^{n}z_{i}=1, \;\; z_i\geq0\right\}$ is the $ (n-1) $-dimensional standard simplex.
	The dimensions are set to be $(l,n)=(20,1200)$.  
	For some chosen curvature pairs $(\bar m, \bar M)\in\R^2_{++}$, the scalars $\alpha_1$ and $\alpha_2$ were chosen so that
	$\bar M=\lambda_{\max}(\nabla^{2}f)$ and $- \bar m=\lambda_{\min}(\nabla^{2}f)$ where $\lambda_{\max}(\cdot)$ and $\lambda_{\min}(\cdot)$
	denote the largest and smallest eigenvalues functions, respectively. Note that we set $ \Omega=\R^n $ in this subsection.
	
	In addition to the nine methods described at the beginning of Section \ref{sec:computResults},
	this subsection (and only this one) also reports the performance of a quasi-Newton
	variant of UPFAG, called QN, as described in \cite{LanUniformly} (see its paragraph containing (2.13)).
		Each iteration of QN  performs an unaccelerated step with respect to a variable metric
		and whose computation requires the
		evaluation of a point-to-point
		operator of the form
		$(I+ V^{-1} \partial h)^{-1}(\cdot)$ for some $V\in S_{++}^n $ (see \cite{becker2012quasi}).
		More specifically, QN is almost the same as UP (and hence has the same set of parameters as UP), except that it replaces (2.10) by (2.13) in \cite{LanUniformly}, where the quasi-Newton matrix $ G_k $ in (2.13) is updated as in the symmetric-rank-1 method (see \cite{becker2012quasi}).
	
	In our implementation, all methods use the centroid of $\Delta_{n}$ as the initial point $z_0$ and terminate with a pair $(z,v)$ satisfying
	\begin{equation}\label{eq:comp_term}
		v\in \nabla f(z)+N_{\Delta_n}(z), \qquad  \frac{\|v\|}{\|\nabla f(z_{0})\|+1}\leq 10^{-7}.
	\end{equation}
	The input triple of NC is set to 
	$ (M,m,A_0)=(\bar M/0.99,\bar m,1000) $ and
	that of AD  is set to 
	$ (M_0,m_0,\theta)=(1,1,1.25) $.

	Test cases specified by pairs $ (\bar M,\bar m) $ are generated by choosing the corresponding $\alpha_1$ and $ \alpha_2 $ as discussed in the first paragraph in this subsection. Computational results for ten methods with fixed $ \bar M=16777216 $ 
	are presented Table \ref{tab:t1} and with fixed $ \bar m=1 $ are presented in Table \ref{tab:t2}.
	In each table, the first column gives the values of $ \bar m $ or $ \bar M $ used to generate the instances,
	the second to eighth (resp., ninth to eleventh) columns provide the number of iterations and running times of AG, UP, QN, NM, NC, AD and RA (resp., UP(B), AD(B) and RA(B)). 
	The objective function values obtained by all methods are not reported since they are essentially the same on all instances.
	The bold numbers highlight the methods (using and without using Barzilai-Borwein stepsizes) that have the best performance for each case. 
	The numbers marked with * indicate that the maximum number of iterations has been reached.

	\begin{table}[H]
		\begin{centering}
			\begin{tabular}{|>{\centering}p{1.3cm}|>{\centering}p{0.7cm}>{\centering}p{0.7cm}>{\centering}p{0.7cm}>{\centering}p{0.9cm}>{\centering}p{0.7cm}>{\centering}p{0.7cm}>{\centering}p{0.7cm}|>{\centering}p{0.9cm}>{\centering}p{0.9cm}>{\centering}p{0.9cm}|}
				\hline 
				{\small{}$\bar m$} & \multicolumn{7}{c|}{\makecell{\small{}Iteration Count / \\ \small{}Running Time (s)}}
				& \multicolumn{3}{c|}{\makecell{\small{}Iteration Count / \\ \small{}Running Time (s)}}
				\tabularnewline  
				\cline{2-11}
				& {\small{}AG} & {\small{}UP} & {\small{}QN} & {\small{}NM} & {\small{}NC}& {\small{}AD} & {\small{}RA} &  {\small{}UP(B)} & {\small{}AD(B)} & {\small{}RA(B)}
				\tabularnewline
				\hline 
				{\small{}16777216} & {\small{}638\\97} & {\small{}220\\47} & {\small{}219\\64} & {\small{}251\\31} & {\small{}2376}\\ {\small{}286} & {\small{}3\\ \textbf{1}} & {\small{}3\\ \textbf{1}} 
				& {\small{}605\\ 258}  & {\small{}3\\ 3} & {\small{}3\\ \textbf{2}} 
				\tabularnewline \hline
				{\small{}1048576} & {\small{}1358\\224} & {\small{}1176\\252} & {\small{}103\\34}& {\small{}1157\\184} & {\small{}3469}\\{\small{}421} & {\small{}318\\63} & {\small{}58\\ \textbf{12}} 
				& {\small{}10\\ \textbf{6}} & {\small{}19\\ 9} & {\small{}17\\ \textbf{6}} 
				\tabularnewline
				\hline
				{\small{}65536} & {\small{}22293\\3524} & {\small{}5676\\1284} & {\small{}2737\\959}& {\small{}44705\\6525} & {\small{}3832}\\{459} & {\small{}747\\{157}} & {\small{}80\\ \textbf{18}}
				& {\small{}30\\16} & {\small{}57\\20} & {\small{}30\\\textbf{10}}
				\tabularnewline \hline
				{\small{}4096} & {\small{}31385\\5184} & {\small{}8286\\1918} & {\small{}919\\320} & {\small{}50000*\\7070} & {\small{}{17585}\\2101} & {\small{}1000\\{211}} & {\small{}74\\\textbf{18}}
				& {\small{}39\\{21}} & {\small{}90\\{34}} & {\small{}36\\\textbf{14}}
				\tabularnewline
				\hline
				{\small{}256} & {\small{}26961\\4369} & {\small{}7464\\1667} & {\small{}3410\\1126} & {\small{}49602\\7001} & {\small{}{31333}\\3713} & {\small{}969\\{216}} & {\small{}76\\\textbf{18}}
				& {\small{}35\\{18}} & {\small{}95\\{34}} & {\small{}44\\\textbf{17}}
				\tabularnewline
				\hline
				{\small{}16} & {\small{}26918\\4215} & {\small{}7334\\1609} & {\small{}665\\221} & {\small{}49515\\6806} & {\small{}{32517}\\3958} & {\small{}967\\{223}} & {\small{}75\\\textbf{18}}
				& {\small{}30\\{15}} & {\small{}80\\{29}} & {\small{}34\\\textbf{13}}
				\tabularnewline
				\hline
			\end{tabular}
			\par\end{centering}
		\caption{Numerical results for instances with fixed $ \bar M=16777216 $}\label{tab:t1}
	\end{table}
	\vspace{-5mm}

	\begin{table}[H]
		\begin{centering}
			\begin{tabular}{|>{\centering}p{1.3cm}|>{\centering}p{0.7cm}>{\centering}p{0.7cm}>{\centering}p{0.9cm}>{\centering}p{0.9cm}>{\centering}p{0.7cm}>{\centering}p{0.7cm}>{\centering}p{0.7cm}|>{\centering}p{0.9cm}>{\centering}p{0.9cm}>{\centering}p{0.9cm}|}
				\hline 
				{\small{}$\bar M$} & \multicolumn{7}{c|}{\makecell{\small{}Iteration Count / \\ \small{}Running Time (s)}}
				& \multicolumn{3}{c|}{\makecell{\small{}Iteration Count / \\ \small{}Running Time (s)}}
				\tabularnewline  
				\cline{2-11}
				& {\small{}AG} & {\small{}UP} & {\small{}QN} & {\small{}NM} & {\small{}NC}& {\small{}AD} & {\small{}RA} &  {\small{}UP(B)} & {\small{}AD(B)} & {\small{}RA(B)}
				\tabularnewline
				\hline 
				{\small{}4000} & {\small{}31403\\5284} & {\small{}7857\\1682} & {\small{}50000*\\16214} & {\small{}50000*\\7270} & {\small{}17577}\\ {\small{}2244} & {\small{}244\\50} & {\small{}105\\ \textbf{20}} 
				& {\small{}43\\ \textbf{15}}  & {\small{}58\\ 18} & {\small{}58\\ 17} 
				\tabularnewline \hline
				{\small{}16000} & {\small{}20193\\3504} & {\small{}7857\\1739} & {\small{}50000*\\14850}& {\small{}50000*\\7884} & {\small{}30239}\\{\small{}3638} & {\small{}472\\105} & {\small{}79\\ \textbf{18}} 
				& {\small{}35\\ 15} & {\small{}51\\ 18} & {\small{}34\\ \textbf{12}} 
				\tabularnewline
				\hline
				{\small{}64000} & {\small{}26962\\4891} & {\small{}7464\\1652} & {\small{}50000*\\15511}& {\small{}49592\\7628} & {\small{}31334}\\{3803} & {\small{}560\\{125}} & {\small{}77\\ \textbf{18}}
				& {\small{}38\\16} & {\small{}64\\23} & {\small{}37\\ \textbf{13}}
				\tabularnewline \hline
				{\small{}256000} & {\small{}26926\\4759} & {\small{}7364\\1522} & {\small{}3488\\1131} & {\small{}49534\\7541} & {\small{}{32527}\\3980} & {\small{}930\\{206}} & {\small{}75\\\textbf{18}}
				& {\small{}38\\{20}} & {\small{}72\\{27}} & {\small{}36\\\textbf{14}}
				\tabularnewline
				\hline
				{\small{}1024000} & {\small{}26918\\4717} & {\small{}7364\\1601} & {\small{}3234\\1028} & {\small{}49521\\7815} & {\small{}{32518}\\4092} & {\small{}967\\{227}} & {\small{}74\\\textbf{18}}
				& {\small{}38\\{22}} & {\small{}77\\{29}} & {\small{}35\\\textbf{13}}
				\tabularnewline
				\hline
				{\small{}4096000} & {\small{}26916\\4547} & {\small{}7264\\1602} & {\small{}99\\33} & {\small{}49523\\7847} & {\small{}{32515}\\4265} & {\small{}967\\{231}} & {\small{}79\\\textbf{18}}
				& {\small{}39\\{21}} & {\small{}82\\{32}} & {\small{}36\\\textbf{13}}
				\tabularnewline
				\hline
			\end{tabular}
			\par\end{centering}
		\caption{Numerical results for instances with fixed $ \bar m=1 $}\label{tab:t2}
	\end{table}
	
	
	In summary, computational results demonstrate that: i) among the methods which do not use the Barzilai-Borwein stepsize (see columns 2-8 of Tables \ref{tab:t1}-\ref{tab:t2}), RA has the best performance in terms of running time; ii) UP(B) is comparable with RA (see columns 8 and 9 of Tables \ref{tab:t1}-\ref{tab:t2}); and iii) RA(B) has the best performance among the three methods which use the Barzilai-Borwein stepsize (see columns 9-11 of Tables \ref{tab:t1}-\ref{tab:t2}).

	\subsection{Matrix problem} \label{subsec:matrix}
	In this subsection, we test our methods on a matrix version of the nonconvex quadratic programming problem
	\[
	\min\left\{ f(Z):=-\frac{\alpha_1}{2}\|D\mathcal{B}(Z)\|^{2}+\frac{\alpha_2}{2}\|\mathcal{A}(Z)-b\|^{2}:Z\in P_{n}\right\},
	\]
	where $ \mathcal{A}:S_+^n\rightarrow \R^l $ and $ \mathcal{B}:S_+^n\rightarrow \R^n $ are linear operators defined by
	\begin{align*}
		&\left[ \mathcal{A}(Z) \right]_i=\inner{A_i}{Z}_F \text{ for } A_i\in \R^{n\times n} \text{ and } 1\le i\le l,\\
		&\left[ \mathcal{B}(Z) \right]_j=\inner{B_j}{Z}_F \text{ for } B_j\in \R^{n\times n} \text{ and } 1\le j\le n,
	\end{align*}
	with entries of $ A_i $, $ B_j $  sampled
	from the uniform distribution ${\cal U}[0,1]$, 
	and $ P_n $ denotes the spectraplex
	\[
	P_n:=\{Z\in S_+^n:\text{tr}(Z)=1\}.
	\]
	$(\alpha_1,\alpha_2)$, $ D $ and $ b $ are defined as those in Subsection \ref{subsec:vector}. Note that we set $ \Omega= S_+^n $ in this subsection.
	
	
	All methods used the centroid of $ P_n $ as the initial point $ Z_0 $, i.e., $ Z_0 = I_n/n $, where $ I_n $ is the identity matrix of size $ n\times n $. Termination criterion is the same as \eqref{eq:comp_term} except that $ \Delta_n $ is replaced by $ P_n $.
	The input triple of NC is set to 
	 $ (M,m,A_0)=(\bar M/0.99,\bar m,1000) $ and
	that of AD  is set to 
	$ (M_0,m_0,\theta)=(1,1000,1.25) $.
	
	Test cases specified by pairs $ (\bar M,\bar m) $ are generated by choosing the corresponding $\alpha_1$ and $ \alpha_2 $ as discussed in the first paragraph in this subsection. Computational results of all methods with fixed $ \bar M=1000000 $ are presented in Tables \ref{tab:t3}-\ref{tab:t5}. Their formats are the same as that of Table \ref{tab:t1}. 
	The objective function values obtained by all methods are not reported since they are essentially the same on all instances.
	The bold numbers highlight the methods (using and without using Barzilai-Borwein stepsizes) that have the best performance for each case.

	\begin{table}[H]
		\begin{centering}
			\begin{tabular}{|>{\centering}p{1.3cm}|>{\centering}p{0.7cm}>{\centering}p{0.7cm}>{\centering}p{0.8cm}>{\centering}p{0.7cm}>{\centering}p{0.7cm}>{\centering}p{0.7cm}|>{\centering}p{0.9cm}>{\centering}p{0.9cm}>{\centering}p{0.9cm}|}
				\hline 
				{\small{}$\bar m$} & \multicolumn{6}{c|}{\makecell{\small{}Iteration Count / \\ \small{}Running Time (s)}}
				& \multicolumn{3}{c|}{\makecell{\small{}Iteration Count / \\ \small{}Running Time (s)}}
				\tabularnewline  
				\cline{2-10}
				& {\small{}AG} & {\small{}UP} & {\small{}NM} & {\small{}NC}& {\small{}AD} & {\small{}RA} &  {\small{}UP(B)} & {\small{}AD(B)} & {\small{}RA(B)}
				\tabularnewline
				\hline 
				{\small{}1000000} & {\small{}46\\2} & {\small{}12\\\textbf{1}} & {\small{}80\\2} & {\small{}33}\\ {\small{}\textbf{1}} & {\small{}12\\\textbf{1}} & {\small{}12\\ \textbf{1}} 
				& {\small{}9\\ \textbf{1}}  & {\small{}11\\ \textbf{1}} & {\small{}12\\ \textbf{1}} 
				\tabularnewline \hline
				{\small{}100000} & {\small{}3809\\138} & {\small{}2577\\113}& {\small{}6242\\191} & {\small{}3960}\\{\small{}94} & {\small{}2206\\87} & {\small{}597\\ \textbf{25}} 
				& {\small{}2573\\ 274} & {\small{}593\\ 41} & {\small{}282\\ \textbf{21}} 
				\tabularnewline
				\hline
				{\small{}10000} & {\small{}5400\\198} & {\small{}7697\\347}& {\small{}10404\\328} & {\small{}1247}\\\textbf{29} & {\small{}2591\\{103}} & {\small{}1290\\ {54}}
				& {\small{}6811\\671} & {\small{}835\\57} & {\small{}569\\ \textbf{40}}
				\tabularnewline \hline
				{\small{}1000} & {\small{}4621\\163} & {\small{}6759\\308} & {\small{}11053\\360} & {\small{}{4424}\\111} & {\small{}2637\\{104}} & {\small{}1211\\\textbf{51}}
				& {\small{}6384\\{646}} & {\small{}721\\{48}} & {\small{}581\\\textbf{41}}
				\tabularnewline
				\hline
				{\small{}100} & {\small{}4476\\157} & {\small{}6620\\299} & {\small{}11271\\312} & {\small{}{8870}\\218} & {\small{}2639\\{113}} & {\small{}1373\\\textbf{57}}
				& {\small{}6876\\{683}} & {\small{}812\\{54}} & {\small{}535\\\textbf{37}}
				\tabularnewline
				\hline
			\end{tabular}
			\par\end{centering}
		\caption{Numerical results for instances with fixed $ \bar M=1000000 $}\label{tab:t3}
	\end{table}
	\vspace{-5mm}
	
	In Table \ref{tab:t3}, the dimensions are set to be $ (l, n) = (50, 200)$ and $ 2.5\% $ of entries in $ A_i, B_j  $ are nonzero.

	\begin{table}[H]
		\begin{centering}
			\begin{tabular}{|>{\centering}p{1.3cm}|>{\centering}p{0.7cm}>{\centering}p{0.7cm}>{\centering}p{0.8cm}>{\centering}p{0.7cm}>{\centering}p{0.7cm}>{\centering}p{0.7cm}|>{\centering}p{0.9cm}>{\centering}p{0.9cm}>{\centering}p{0.9cm}|}
				\hline 
				{\small{}$\bar m$} & \multicolumn{6}{c|}{\makecell{\small{}Iteration Count / \\ \small{}Running Time (s)}}
				& \multicolumn{3}{c|}{\makecell{\small{}Iteration Count / \\ \small{}Running Time (s)}}
				\tabularnewline  
				\cline{2-10}
				& {\small{}AG} & {\small{}UP} & {\small{}NM} & {\small{}NC}& {\small{}AD} & {\small{}RA} &  {\small{}UP(B)} & {\small{}AD(B)} & {\small{}RA(B)}
				\tabularnewline
				\hline 
				{\small{}1000000} & {\small{}44\\4} & {\small{}12\\\textbf{1}} & {\small{}75\\5} & {\small{}32}\\ {\small{}2} & {\small{}12\\2} & {\small{}12\\ 2} 
				& {\small{}10\\ \textbf{2}}  & {\small{}12\\ \textbf{2}} & {\small{}12\\ \textbf{2}} 
				\tabularnewline \hline
				{\small{}100000} & {\small{}1411\\134} & {\small{}621\\69}& {\small{}3151\\224} & {\small{}635}\\{\small{}40} & {\small{}530\\52} & {\small{}240\\ \textbf{25}} 
				& {\small{}57\\ 13} & {\small{}151\\ 28} & {\small{}61\\ \textbf{11}} 
				\tabularnewline
				\hline
				{\small{}10000} & {\small{}1963\\195} & {\small{}1733\\191}& {\small{}5071\\373} & {\small{}1104}\\{69} & {\small{}868\\{86}} & {\small{}198\\ \textbf{21}}
				& {\small{}109\\31} & {\small{}211\\39} & {\small{}137\\ \textbf{25}}
				\tabularnewline \hline
				{\small{}1000} & {\small{}1935\\193} & {\small{}1792\\197} & {\small{}5172\\382} & {\small{}{3823}\\244} & {\small{}900\\{94}} & {\small{}215\\\textbf{23}}
				& {\small{}97\\\textbf{25}} & {\small{}208\\{38}} & {\small{}160\\{29}}
				\tabularnewline
				\hline
				{\small{}100} & {\small{}1934\\190} & {\small{}1803\\197} & {\small{}5045\\367} & {\small{}{5771}\\391} & {\small{}904\\{95}} & {\small{}210\\\textbf{23}}
				& {\small{}112\\{29}} & {\small{}225\\{40}} & {\small{}147\\\textbf{27}}
				\tabularnewline
				\hline
			\end{tabular}
			\par\end{centering}
		\caption{Numerical results for instances with fixed $ \bar M=1000000 $}\label{tab:t4}
	\end{table}
	\vspace{-5mm}
	
	In Table \ref{tab:t4}, the dimensions are set to be $ (l, n) = (50, 400)$ and $ 0.5\% $ of entries in $ A_i, B_j  $ are nonzero.

	\begin{table}[H]
		\begin{centering}
			\begin{tabular}{|>{\centering}p{1.3cm}|>{\centering}p{0.7cm}>{\centering}p{0.7cm}>{\centering}p{0.8cm}>{\centering}p{0.7cm}>{\centering}p{0.7cm}>{\centering}p{0.7cm}|>{\centering}p{0.9cm}>{\centering}p{0.9cm}>{\centering}p{0.9cm}|}
				\hline 
				{\small{}$\bar m$} & \multicolumn{6}{c|}{\makecell{\small{}Iteration Count / \\ \small{}Running Time (s)}}
				& \multicolumn{3}{c|}{\makecell{\small{}Iteration Count / \\ \small{}Running Time (s)}}
				\tabularnewline  
				\cline{2-10}
				& {\small{}AG} & {\small{}UP} & {\small{}NM} & {\small{}NC}& {\small{}AD} & {\small{}RA} &  {\small{}UP(B)} & {\small{}AD(B)} & {\small{}RA(B)}
				\tabularnewline
				\hline 
				{\small{}1000000} & {\small{}69\\22} & {\small{}16\\6} & {\small{}117\\26} & {\small{}39}\\ {\small{}8} & {\small{}11\\\textbf{5}} & {\small{}11\\ 6} 
				& {\small{}13\\ 8}  & {\small{}11\\ \textbf{7}} & {\small{}11\\ \textbf{7}} 
				\tabularnewline \hline
				{\small{}100000} & {\small{}277\\119} & {\small{}58\\21}& {\small{}502\\118} & {\small{}165}\\{\small{}39} & {\small{}24\\10} & {\small{}8\\ \textbf{3}} 
				& {\small{}9\\ 7} & {\small{}8\\ \textbf{4}} & {\small{}8\\ \textbf{4}} 
				\tabularnewline
				\hline
				{\small{}10000} & {\small{}491\\173} & {\small{}141\\52}& {\small{}1030\\246} & {\small{}703}\\{168} & {\small{}60\\{23}} & {\small{}60\\ \textbf{21}}
				& {\small{}13\\10} & {\small{}13\\\textbf{7}} & {\small{}13\\8}
				\tabularnewline \hline
				{\small{}1000} & {\small{}531\\169} & {\small{}161\\60} & {\small{}1144\\259} & {\small{}{1326}\\309} & {\small{}70\\{26}} & {\small{}70\\\textbf{25}}
				& {\small{}13\\{10}} & {\small{}15\\\textbf{9}} & {\small{}15\\\textbf{9}}
				\tabularnewline
				\hline
				{\small{}100} & {\small{}535\\172} & {\small{}163\\61} & {\small{}1156\\260} & {\small{}{1482}\\336} & {\small{}71\\{26}} & {\small{}71\\\textbf{25}}
				& {\small{}13\\\textbf{10}} & {\small{}16\\\textbf{10}} & {\small{}16\\\textbf{10}}
				\tabularnewline
				\hline
			\end{tabular}
			\par\end{centering}
		\caption{Numerical results for instances with fixed $ \bar M=1000000 $}\label{tab:t5}
	\end{table}
	\vspace{-5mm}
	
	In Table \ref{tab:t5}, the dimensions are set to be $ (l, n) = (50, 800)$ and $ 0.1\% $ of entries in $ A_i, B_j  $ are nonzero.

	In summary, computational results demonstrate that: i) among the methods which do not use the Barzilai-Borwein stepsize (see columns 2-7 of Tables \ref{tab:t3}-\ref{tab:t5}), RA has the best performance in terms of running time; ii) UP(B) is comparable with RA in many instances (see columns 7 and 8 of Tables \ref{tab:t3}-\ref{tab:t5}); and iii) RA(B) has the best performance among the three methods which use the Barzilai-Borwein stepsize (see columns 8-10 of Tables \ref{tab:t3}-\ref{tab:t5}). 
	
	\subsection{Matrix completion} \label{subsec:MC}
	
	This subsection focuses on a constrained version of the nonconvex low-rank matrix completion problem
	studied in \cite{jliang2019average,yao2017efficient}.
	
	Given an incomplete observed matrix $ O $ with the set $ {\cal Q} $ of observed entries, parameters $\beta>0$ and $\tau>0$ and letting $p:\R\to\R_+ $ denote the log-sum penalty 
	\[
	p(t)= p_{\beta,\tau}(t) := \beta\log\left( 1+\frac{|t|}{\tau}\right)
	\]
	and $ \Pi_{\cal Q} $ denote the linear operator that maps a matrix $ A $ to the matrix whose entries
	in ${\cal Q}$ have the same values of the corresponding ones in $A$ and
	whose entries outside of ${\cal Q}$ are all zero,
	then the constrained version of the matrix completion problem is formulated as
	\begin{equation}\label{eq:MC struc}
		\min_{X\in\R^{l\times n}} f(X)+h(X),
	\end{equation}
	where
	\begin{align*}
		f(X)=\frac12 \|\Pi_{\cal Q}(X-O)\|_F^2+ \mu\sum_{i=1}^{r}[p(\sigma_i(X))-p_0\sigma_i(X)], \\
		h(X)=\mu p_0\|X\|_* + I_{{\cal B}(R)}(X), \quad p_0=p'(0)=\frac{\beta}{\tau},
	\end{align*}
	$R$ is a positive scalar, ${\cal B}(R) := \{ X \in \R^{l \times n} : \|X\|_F \le R\}$,
	$O \in \R^{\cal Q}$ is an incomplete observed matrix, $ \mu>0 $ is a parameter,
	$ r := \min \{l,n\}$ and $\sigma_i(X)$ is the $ i $-th singular value of $ X $
	and $ \|\cdot\|_*$ denotes the nuclear norm defined as
	$ \|\cdot\|_* := \sum_{i=1}^r \sigma_i(\cdot)$. Note that we set $ \Omega=\R^{l\times n} $ in this subsection.
	It is shown in \cite{jliang2019average,yao2017efficient} that the problem in \eqref{eq:MC struc} falls into the general class of SNCO problems,  
	\[
	f(X')-f(X)- \inner{\nabla f(X')}{X'-X}_F\le \frac{\tilde M}{2}\|X'-X\|_F^2, \quad \forall X, X' \in \Omega
	\]
	for $ \tilde M=1 $
	and that the pair
	\begin{equation}\label{eq:M-MC}
		(\bar M,\bar m)=\left( \max\left\lbrace \tilde M,\frac{2\mu\beta}{\tau^2}\right\rbrace ,\frac{2\mu\beta}{\tau^2} \right) 
	\end{equation}
	satisfies \eqref{ineq:Lips} and \eqref{ineq:curv}.
	
	We use the {\it MovieLens} dataset\footnote{http://grouplens.org/datasets/movielens/}
	to obtain the observed index set ${\cal Q}$ and the incomplete observed matrix $ O $.
	The dataset includes a sparse matrix with 100,000 ratings of \{1,2,3,4,5\} from 943 users on 1682 movies.
	The radius $ R $ is chosen as the Frobenius norm of the matrix of size $ 943 \times 1682 $
	containing the same entries as $ O $ in $ {\cal Q} $ and 5 in the entries outside of $ {\cal Q} $.
	
	All methods take a random matrix $ Z_0 $ sampled from the standard Gaussian distribution as the initial point, where the random number generation seed is fixed, and terminates with a pair $ (Z,V) $ satisfying
	\[
	V\in \nabla f(Z)+\partial h(Z), \qquad  \frac{\|V\|_F}{\|\nabla f(Z_{0})\|_F+1}\leq 5\times 10^{-4}.
	\]
	The input triple of NC is set to 
	 $ (M,m,A_0)=(\tilde M,\tilde M,2) $, since $ \tilde M $ is the one actually needed in the convergence analysis of this algorithm (see Lemma \ref{consequenceproposition}). 
	The input triple of AD  is set to 
	$ (M_0,m_0,\theta)=(1,0.5,1.25) $.
	
	Computational results of all methods are summarized in Table \ref{tab:t6}. Specifically, the first column
	gives the values of $\bar M$ computed according to \eqref{eq:M-MC} with four different triples $ (\mu, \beta, \tau) $,
	the second to seventh columns provide the function values of \eqref{eq:MC struc} at the last iteration and the number of iterations, and the eighth to thirteenth columns present the running times. The bold numbers highlight the methods that have the best performance for each case.
	The results of RA are not reported since they are the same as those of AD, which is due to the fact that $ \{\phi(y_k) \} $ generated by AD is a decreasing sequence and hence no restart is performed in RA.

	\begin{table}[H]
		\begin{centering}
			\begin{tabular}{|>{\centering}p{0.4cm}|>{\centering}p{1cm}>{\centering}p{1cm}>{\centering}p{1cm}>{\centering}p{1cm}>{\centering}p{1cm}>{\centering}p{1cm}|>{\centering}p{0.7cm}>{\centering}p{0.7cm}>{\centering}p{1cm}>{\centering}p{0.7cm}>{\centering}p{0.7cm}>{\centering}p{0.7cm}|}
				\hline 
				{\small{}$\bar M$} & \multicolumn{6}{c|}{\makecell{\small{}Function Value / \\ \small{}Iteration Count}}
				& \multicolumn{6}{c|}{\makecell{\small{}Running Time (s)}}
				\tabularnewline  
				\cline{2-13}
				& {\small{}AG} & {\small{}UP} & {\small{}UP(B)} & {\small{}NM} & {\small{}NC}& {\small{}AD} & {\small{}AG} & {\small{}UP} & {\small{}UP(B)} & {\small{}NM} & {\small{}NC} & {\small{}AD}
				\tabularnewline
				\hline 
				{\small{}4.4} & {\small{}2257\\3856} & {\small{}2670\\898} & {\small{}2605\\521} & \textbf{\small{}1809}\\ {\small{}1036} & {\small{}2605\\1491} & {\small{}2625\\ 1219} 
				& {\small{}4568} & {\small{}2214} & {\small{}1545} & {\small{}1033} & {\small{}1114} & \textbf{\small{}1021} 
				\tabularnewline \hline
				{\small{}8.9} & {\small{}3886\\9158} & {\small{}4322\\1782} & {\small{}4261\\576} & \textbf{\small{}3359}\\{\small{}1617} & {\small{}4154\\1642} & {\small{}4203\\ 1302} 
				& {\small{}10251} & {\small{}2592} & {\small{}1621} & {\small{}1605} & {\small{}1202} & \textbf{\small{}1089} 
				\tabularnewline
				\hline
				{\small{}20} & {\small{}4282\\22902} & {\small{}4736\\3962} & {\small{}4637\\898} & \textbf{\small{}3635}\\{2875} & {\small{}4637\\{676}} & {\small{}4582\\2177}
				& {\small{}29274} & {\small{}5850} & {\small{}1914} & {\small{}2836} & \textbf{\small{}1178} & {\small{}1822}
				\tabularnewline \hline
				{\small{}30} & {\small{}5967\\37032} & {\small{}6475\\5857} & {\small{}6753\\606} & {\small{}\textbf{5237}\\3717} & {\small{}6292\\{1646}} & {\small{}6293\\{1952}}
				& {\small{}41673} & {\small{}8159} & {\small{}1628} & {\small{}4182} & \textbf{\small{}1233} & {\small{}1633}
				\tabularnewline
				\hline
			\end{tabular}
			\par\end{centering}
		\caption{Numerical results for matrix completion instances}\label{tab:t6}
	\end{table}
	\vspace{-5mm}
	
	
	
	In summary, computational results demonstrate that: i) NM always finds the smallest function values, since it requires objective function values to satisfy a descent property, and if violated, a projected gradient step is taken to ensure the descent in function values;
		ii) NC and AD have the best performance in terms of the running time;
		and iii) since NC and AD are good enough compared with UP(B), we do not presents the results of AD(B) and RA(B).
	
	\subsection{Nonnegative matrix factorization}\label{subsec:NMF}
	In this subsection, we further test AD on a real life application rather than artificially generated problems and data.
	NMF is a popular dimension reduction method in which a data matrix $ X $ is factored into two matrices $ V $ and $ W $, with constraints that each entry in $ V $ and $ W $ is nonnegative.
	\begin{equation}\label{eq:NMF}
		\min \left \{f(V,W):=\frac{1}{2}\|X-VW\|_F^2:V\ge0,W\ge0\right\},
	\end{equation}
	where $ X\in \R^{n\times l} $, $ V\in \R^{n\times k} $ and $ W\in \R^{k\times l} $. Note that we set $ \Omega=\R^{n\times l} $ in this subsection.
	Intuitively, the data matrix $ X $ is a collection of $ m $ data points in $ \R^n $, the columns of $ V $ can be viewed as the basis of all data points, and hence each data point is a linear combination of the basis, with weights in the corresponding column in $ W $. 
	Because of its ability of extracting easily interpretable factors and automatically performing clustering, NMF finds a wide range of applications in practice, from text mining to image processing.
	Most of the NMF algorithms solve \eqref{eq:NMF} in a two-block coordinate descent manner, by alternatively minimizing with respect to one of the two blocks, $ V $ or $ W $, while keeping the other one fixed. Alternating minimization is a natural idea for NMF, since the subproblem in one block is convex. 
	
	In this subsection, we apply AD to solve the nonconvex problem \eqref{eq:NMF} directly by minimizing in $ (V,W) $ jointly. 
	
	For a preliminary computational test, we apply AD to facial feature extraction. The problem is as described in \eqref{eq:NMF}, to factor out a data matrix into two matrices. The facial image dataset is provided by AT\&T Laboratories Cambridge \footnote{https://www.cl.cam.ac.uk/research/dtg/attarchive/facedatabase.html}. There are ten different images of each of 40 distinct subjects, and each image contains $ 92\times112 $ pixels, with 256 gray levels per pixel. It results in a matrix of size $ 10, 304\times400 $, where each column of the data matrix is the vectorization of an image.
	
	It is hard to estimate $ M $ in \eqref{ineq:Lips} due the unboundedness in NMF, so we can only apply AD, which has the benefit of working without the knowledge of $ M $. AD is benchmarked against the ANLS (Alternating Nonnegative Least Squares) method \cite{kim2008toward}. ANLS alternatively solves minimization subproblems in $ V $ and $ W $ with nonnegative constraints and the other variable being fixed. We use the implementation of ANLS \footnote{https://www.cc.gatech.edu/~hpark/nmfsoftware.html} provided by the authors of \cite{kim2008toward} as a benchmark for comparison. The ANLS code is slightly modified to
	accommodate for the termination criterion \eqref{eq:stop}.
	
	Both methods use the initial point $ (V_0,W_0)= (\mathbbm{1}^{n\times k}/(nk), \mathbbm{1}^{k\times l}/(kl)) $, where $ \mathbbm{1}^{n\times k} $ and $ \mathbbm{1}^{k\times l} $ are all one matrices of size $ n\times k $ and $ k\times l $. $ k $ is set to be 20. AD terminates with a pair $((V,W),(S_V,S_W))$ satisfying
	\begin{equation}\label{eq:stop}
		(S_V,S_W) \in \nabla f(V,W)+N_{\mathcal{F}}(V,W), \qquad  \frac{\|(S_V,S_W)\|_F}{\|\nabla f(V_0,W_0)\|_F+1}\leq 10^{-7},
	\end{equation}
	where $ \mathcal{F} = \{(V,W)\in \R^{n\times k} \times \R^{k\times l}:V\ge 0, W\ge 0\} $.
	The input triple of AD  is set to $(M_0, m_0, \theta)=(1, 1000, 1.25)$. Computational results are summarized in Table \ref{tab:t7}.
	
	\begin{table}[H]
		\begin{centering}
			\begin{tabular}{|>{\centering}p{2cm}|>{\centering}p{3cm}|>{\centering}p{3cm}|>{\centering}p{3cm}|}
				\hline 
				{Method} & {Function Value} & {Iteration Count}& {Running Time(s)}\tabularnewline
				\hline 
				{\small\text{AD}} & {\small{}2.80E+09} & {\small{}28} &  {\small{}4.6}\tabularnewline
				{\small\text{ANLS}}  & {\small{}1.20E+09} & {\small{}1000*} & {\small{}137.6} \tabularnewline
				\hline  
			\end{tabular}
			\par\end{centering}
		\caption{Numerical results for NMF}\label{tab:t7}
	\end{table}
	

	In summary, computational results demonstrate that ANLS reaches the maximum number of iterations (i.e., 1000), and AD outperforms ANLS in terms of the running time.

	\section{Concluding remarks}\label{sec:conclusions}
	
	This paper presents two ACG variants and establishes their iteration-complexities for obtaining an approximate solution of the SNCO problem. Numerical results are also given showing that they are both efficient in practice.
	
	We have not assumed in our analysis that the set $\Omega$ as in assumption (A3) is bounded. However,
	we remark that if $\Omega$ is bounded then it can be shown using a simpler analysis than the one given in
	this paper that the version of the NC-FISTA with $m=0$ and $\lam=1/(2M)$
	has an
	\[
	\mathcal{O}\left( \left(\frac{M^2 d_0^2}{ \hat{\rho}^2} \right)^{1/3} + \left(\frac{M\bar m D^2_\Omega}{ \hat{\rho}^2} \right)^{1/2} + \frac{M\bar m D^2_h}{ {\hat{\rho}}^2}  +1 \right)
	\]
	iteration-complexity where 
	$D_\Omega :=\sup_{u,u' \in \Omega}  \| u'-u \| < \infty$. Moreover, it can be shown that a version of the
	ADAP-NC-FISTA in which $\lam_k$ is updated in a similar way and
	$m_k=0$ for every $k$ has a guaranteed iteration-complexity
	that lies in between the one above and the one in \eqref{cmplx}.
	
	Finally, we have implemented the two versions mentioned in the previous paragraph
	and tested them on problems for which $\Omega$ is bounded but have observed that
	they are not as efficient as the corresponding ones
	studied in this paper.

	\section{Acknowledgements}
	We are grateful to Guanghui Lan and Saeed Ghadimi for sharing the source code of the UPFAG method in \cite{LanUniformly}.
	We are also grateful to the two anonymous referees and the associate editor for their insightful comments which we have used to substantially improve the quality of this work.
	
	\bibliographystyle{plain}
	\bibliography{Proxacc_ref}

	\setcounter{claim}{0}
	\renewcommand{\theclaim}{\Alph{claim}}
	
	\appendix
	\section{Supplementary results}\label{Appendix A}
	This section provides a bound on the quantity
	$ \min_{0 \leq i \leq k-1} \|y_{i+1}-\tx_i\|^2 $ for the case in which the parameter
	$m_0$ of the ADAP-NC-FISTA satisfies
	$ m_0\ge\bar m $.
	Note that an alternative bound on this quantity has already been developed in
	Proposition \ref{lem:xi} for any $m_0>0$.
	
	\begin{proposition}\label{lem:xi2}
		For every $ k\ge 1 $, for $ m_0\ge \bar m $, we have
		\[
		\frac{1}{10}\left( \sum_{i=0}^{k-1}A_{i+1}\right) \min_{0 \leq i \leq k-1} \| y_{i+1} - \tilde{x}_i \|^2 \le 
		2\lam_0 A_0 (\phi(y_0)-\phi_*)+\|x_0-x^*\|^2+\lam_0D_h^2 \left( 2m_0+2m_0 k+ \bar m\sum_{i=0}^{k-1}a_i\right) .
		\]
	\end{proposition}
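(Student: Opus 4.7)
\textbf{Proof proposal for Proposition \ref{lem:xi2}.} The plan is to follow the same template as Proposition \ref{lem:xi}, but invoke the recursive inequality \eqref{ineq:recursive} at an optimal point $u = x^\ast \in Z^\ast$ (rather than $u = x_0$). This yields, for every $0 \le i \le k-1$,
\[
(1-\lam_{i+1}\mathcal{C}_{i+1})A_{i+1}\|y_{i+1}-\tx_i\|^2 + (2m_{i+1}\lam_{i+1}+1)\|x^\ast - x_{i+1}\|^2 + 2\lam_{i+1}A_{i+1}(\phi(y_{i+1})-\phi_\ast)
\]
\[
\le 2\lam_{i+1}A_i(\gamma_i(y_i)-\phi(y_i)) + 2\lam_{i+1}a_i(\gamma_i(x^\ast)-\phi_\ast) + 2\lam_{i+1}A_i(\phi(y_i)-\phi_\ast) + \|x^\ast - x_i\|^2.
\]
To control the $\gamma_i$-gap terms I would reuse the argument of Lemma \ref{gammatildegamma}(b) to obtain $\gamma_i(u)-\phi(u) \le \tfrac12(\bar m + 2m_{i+1}/a_i)\|u-\tx_i\|^2$ for every $u \in \dom h$, together with the identity already employed in \eqref{ineq:tech}, namely $A_i\|y_i-\tx_i\|^2 + a_i\|u-\tx_i\|^2 \le \|u-x_i\|^2 + a_i D_h^2$.

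The hypothesis $m_0 \ge \bar m$ enters next. A short induction, analogous to the proof of Lemma \ref{observation}(e) (using that $a_k \ge a_0 = 2$ and $\underline m_{k+1} \le \bar m$), shows that $(\lam_k,m_k) = (\lam_k,m_0)$ always satisfies \eqref{req:xi} initially in SUB, so the subroutine never doubles $m$; hence $m_{i+1} = m_0$ for all $i$. Consequently $\bar m + 2m_{i+1}/a_i \le 2m_0$, and the two $\gamma_i$-gap terms are dominated by $\lam_{i+1}(\bar m + 2m_0/a_i)(\|x^\ast-x_i\|^2 + a_i D_h^2) \le 2m_0\lam_{i+1}\|x^\ast-x_i\|^2 + (\bar m a_i + 2m_0)\lam_{i+1} D_h^2$. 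Applying Lemma \ref{observation}(c) to replace $1-\lam_{i+1}\mathcal{C}_{i+1}$ by $1/10$ and rearranging produces the key per-iteration inequality
\[
\tfrac{1}{10}A_{i+1}\|y_{i+1}-\tx_i\|^2 \le (1+2m_0\lam_{i+1})\bigl[\|x^\ast-x_i\|^2 - \|x^\ast-x_{i+1}\|^2\bigr] + 2\lam_{i+1}\bigl[A_i\Phi_i - A_{i+1}\Phi_{i+1}\bigr] + (\bar m a_i + 2m_0)\lam_{i+1} D_h^2,
\]
where $\Phi_j := \phi(y_j)-\phi_\ast \ge 0$.

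The last step is to sum over $i = 0,\dots,k-1$. The two sums with varying prefactors are handled by Abel summation. For the distance sum, since $\{\lam_i\}$ is non-increasing,
\[
\sum_{i=0}^{k-1}(1+2m_0\lam_{i+1})\bigl[\|x^\ast-x_i\|^2 - \|x^\ast-x_{i+1}\|^2\bigr] \le (1+2m_0\lam_0)\|x_0-x^\ast\|^2 \le \|x_0-x^\ast\|^2 + 2m_0\lam_0 D_h^2,
\]
where the inner-product cross terms carry the favourable sign $2m_0(\lam_{i+1}-\lam_i) \le 0$ and I bound $\|x_0-x^\ast\|^2 \le D_h^2$. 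For the objective-gap sum, Abel summation together with $\Phi_j \ge 0$ and $\lam_{i+1}\le \lam_i$ yields $\sum_{i=0}^{k-1} 2\lam_{i+1}[A_i\Phi_i - A_{i+1}\Phi_{i+1}] \le 2\lam_0 A_0(\phi(y_0)-\phi_\ast)$. The residual $D_h^2$ sum is $\lam_0 D_h^2(\bar m \sum_{i=0}^{k-1} a_i + 2m_0 k)$. Collecting the three contributions reproduces the claimed bound. The main technical obstacle will be the Abel-summation bookkeeping: I must verify that every boundary or telescope-residual term has a sign compatible with being dropped, which crucially uses the monotonicity of $\{\lam_k\}$ from Lemma \ref{observation}(d) and the non-negativity $\phi(y_j) \ge \phi_\ast$.
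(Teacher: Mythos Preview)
Your proposal is correct and follows essentially the same route as the paper's proof: both take $u=x^\ast$ in the recursive inequality, show $m_{i+1}=m_0$ from $m_0\ge\bar m$ and $a_i\ge2$, bound the $\gamma_i$-gaps via Lemma~\ref{gammatildegamma}(b) and \eqref{ineq:tech}, and then sum. The only cosmetic difference is that the paper uses the index-shifted bound $\lam_{i+1}(\bar m+2m_0/a_i)\le 2m_0\lam_i$ (together with $2(\lam_{i+1}-\lam_i)A_i\Phi_i\le0$) to obtain a clean telescope directly, whereas you keep the coefficient $2m_0\lam_{i+1}$ and handle the resulting varying prefactors by Abel summation; both maneuvers exploit exactly the same monotonicity of $\{\lam_i\}$ and non-negativity of $\phi(y_i)-\phi_\ast$.
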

	\begin{proof}
		Using the assumption of the lemma that $ m_0\ge \bar m $, the facts that $a_i \ge 2$ for $ i\ge 0 $ from the fourth remark following SUB$ (\theta,\lam,m) $, and $ \{\lam_i\} $ is non-increasing from Lemma \ref{observation}(d), we have
		\begin{equation}\label{ineq:new}
			\left( \bar m+\frac{2m_0}{a_i}\right) \lam_{i+1}\le m_0 \left( 1  +\frac{2}{a_i}\right) \lam_{i+1} \le 2m_0\lam_i.
		\end{equation}
		The above inequality implies that \eqref{req:xi} is always satisfied with $ m=m_0 $ and $ \lam=\lam_{i+1} $. Hence, $ m_k $ is never updated in SUB$ (\theta,\lam,m) $, i.e., $ m_i=m_0 $, for $ i\ge0 $.
		Using similar arguments as in the proof of Lemma \ref{invariantinequality}, we conclude that  for every $i \ge 0$ and $u \in \Omega$,
		\begin{align}
			& 2\lam_{i+1} A_{i+1} \phi(y_{i+1}) +    (2m_0\lam_{i+1}+1) \| u - x_{i+1} \|^2 + (1-\lam_{i+1} {\cal C}_{i+1}) A_{i+1} \| y_{i+1} - \tilde{x}_i \|^2  \nn \\
			& \leq  2\lam_{i+1} A_i \gamma_i(y_i) + 2\lam_{i+1}  a_i \gamma_i(u) + \|u - x_i \|^2, \label{ineq:recursive2}
		\end{align}
		where
		\[
		\gamma_i(u) := \tilde\gamma_i(y_{i+1}) +  \frac{1}{\lam_{i+1}}\langle \tilde{x}_i - y_{i+1}, u - y_{i+1} \rangle + \frac{m_0}{a_i} \| u - y_{i+1} \|^2
		\]
		and
		\begin{equation}\label{def:tgamma1}
			\tilde \gamma_i(u) :=  \ell_f(u;\tilde{x}_i) + h(u) + \frac{m_0}{a_i} \| u - \tilde{x}_i \|^2.
		\end{equation}
		As in Lemma \ref{gammatildegamma}(a), we have $ \gamma_i(u)\le \tilde \gamma_i(u) $ for every $ u\in \dom h $.
		Hence, it follows from \eqref{def:tgamma1} and \eqref{ineq:curv} that for every $ k\ge 0 $ and $ u  \in \dom h$, we have 
		\begin{align}
			\gamma_i(u)-\phi(u)&\le \tilde \gamma_i(u)-\phi(u)
			=\ell_f(u;\tx_i)-f(u)+\frac{m_0}{a_i}\|u-\tx_i\|^2 
			\le \frac12\left( \bar m+\frac{2m_0}{a_i}\right) \|u-\tx_i\|^2. \label{ineq:diff2}
		\end{align}
		Taking $ u=x^* $, and using \eqref{ineq:recursive2}, \eqref{ineq:tech}, \eqref{ineq:diff2}, \eqref{ineq:new}, Lemma \ref{observation}(c),
		and the facts that $ x_0=y_0 $, $ \lam_i\le \lam_0 $ and $ \phi(y_i)\ge\phi_* $ for $ i\ge 0 $, we conclude that for every $0 \le i \le k-1$,
		\begin{align*}
			0.1A_{i+1}& \| y_{i+1} - \tilde{x}_i \|^2 -  2\lam_{i} A_{i}(\phi(y_{i}) - \phi_*) -  \| x^* - x_{i} \|^2  \nn \\
			&   + 2\lam_{i+1}A_{i+1} (\phi(y_{i+1}) - \phi_*) + (2m_0\lam_{i+1}+1)\|x^* - x_{i+1} \|^2 \nonumber  \\
			&   \le  2\lam_{i+1} A_i(\gamma_i(y_i) - \phi(y_i)) + 2\lam_{i+1} a_i(\gamma_i(x^*) - \phi_* )+2(\lam_{i+1}-\lam_{i}) A_{i}(\phi(y_{i}) - \phi_*) \nonumber \\
			&  \le \lam_{i+1}\left( \bar m + \frac{2m_0}{a_i} \right)  \left( A_i\| y_i - \tx_i \|^2 + a_i\| x^* - \tx_i \|^2 \right)  \nonumber  \\
			&  \le \lam_{i+1}\left( \bar m + \frac{2m_0}{a_i} \right) \left( \| x^* - x_i \|^2 + a_i D_h^2 \right) \nonumber  \\
			&  \le 2m_0\lam_{i} \| x_i-x^* \|^2 + (\bar m a_i+2m_0)\lam_{i+1} D_h^2 \nn \\
			&  \le 2m_0\lam_{i} \| x_i-x^* \|^2 + (\bar m a_i+2m_0)\lam_0 D_h^2.
		\end{align*}
		The conclusion is obtained by rearranging terms and summing the above inequality from $ i=0 $ to $ k-1 $.
	\end{proof}
	
\end{document}